\title{Clausal Analysis of First-order Proof Schemata}
\author{
David M. Cerna\inst{1}
\and
Michael Lettmann\inst{2}\thanks{Funded by FWF project W1255-N23.}
}
\institute{
  Reseach Institute for Symbolic Computation\\
Johannes Kepler University\\
Linz, Austria\\
\email{david.cerna@risc.jku.at}
\and
   Institute of Information Systems\\
   Technische Universit\"at Wien\\
    Vienna, Austria\\
    \email{lettmann@logic.at}
 }
\authorrunning{Cerna and Lettmann}
\titlerunning{Clausal Analysis of Proof Schemata}
\newcommand*{\shifttext}[2]{%
  \settowidth{\@tempdima}{#2}%
  \makebox[\@tempdima]{\hspace*{#1}#2}%
}
\newcommand\fat[1]{\ThisStyle{\ooalign{%
  \kern.46pt$\SavedStyle#1$\cr\kern.33pt$\SavedStyle#1$\cr%
  \kern.2pt$\SavedStyle#1$\cr$\SavedStyle#1$}}}
\newcommand{\ES}[1]{\ensuremath{\textit{es}(#1)}}
\newcommand{\lyus}[1]{
	\if#10 \ensuremath{\mbox{\includegraphics[scale=.025]{}}}
	 \else
	  \if#11 \ensuremath{\mbox{\includegraphics[scale=.035]{yus}}}
	   \else
	    \if#12 \ensuremath{\mbox{\includegraphics[scale=.045]{yus}}}
	     \else
	      \if#13 \ensuremath{\mbox{\includegraphics[scale=.055]{yus}}}
	       \else
	        \if#14 \ensuremath{\mbox{\includegraphics[scale=.065]{yus}}}
	         \else
	          \if#15 \ensuremath{\mbox{\includegraphics[scale=.075]{yus}}}
	          \else \ensuremath{\mbox{\includegraphics[scale=.045]{yus}}}
	          \fi 
	         \fi 
	        \fi 
	       \fi 
	      \fi 
	     \fi 
	     \makebox[.05cm][l]{}
}
\newcommand{\byus}[1]{
	\if#10 \ensuremath{\mbox{\includegraphics[scale=.025]{}}}
	 \else
	  \if#11 \ensuremath{\mbox{\includegraphics[scale=.035]{byus}}}
	   \else
	    \if#12 \ensuremath{\mbox{\includegraphics[scale=.045]{byus}}}
	     \else
	      \if#13 \ensuremath{\mbox{\includegraphics[scale=.055]{byus}}}
	       \else
	        \if#14 \ensuremath{\mbox{\includegraphics[scale=.065]{byus}}}
	         \else
	          \if#15 \ensuremath{\mbox{\includegraphics[scale=.075]{byus}}}
	          \else \ensuremath{\mbox{\includegraphics[scale=.045]{byus}}}
	          \fi 
	         \fi 
	        \fi 
	       \fi 
	      \fi 
	     \fi 
\makebox[.05cm][l]{}
}
\newcommand{\ibyus}[1]{
	\if#10 \ensuremath{\mbox{\includegraphics[scale=.025]{ibyus}}}
	 \else
	  \if#11 \ensuremath{\mbox{\includegraphics[scale=.035]{ibyus}}}
	   \else
	    \if#12 \ensuremath{\mbox{\includegraphics[scale=.045]{ibyus}}}
	     \else
	      \if#13 \ensuremath{\mbox{\includegraphics[scale=.055]{ibyus}}}
	       \else
	        \if#14 \ensuremath{\mbox{\includegraphics[scale=.065]{ibyus}}}
	         \else
	          \if#15 \ensuremath{\mbox{\includegraphics[scale=.075]{ibyus}}}
	          \else \ensuremath{\mbox{\includegraphics[scale=.045]{ibyus}}}
	          \fi 
	         \fi 
	        \fi 
	       \fi 
	      \fi 
	     \fi 
\makebox[.05cm][l]{}
}
\newcommand{\byusa}[1]{
	\if#10 \ensuremath{\mbox{\includegraphics[scale=.025]{byusarrow}}}
	 \else
	  \if#11 \ensuremath{\mbox{\includegraphics[scale=.035]{byusarrow}}}
	   \else
	    \if#12 \ensuremath{\mbox{\includegraphics[scale=.045]{byusarrow}}}
	     \else
	      \if#13 \ensuremath{\mbox{\includegraphics[scale=.055]{byusarrow}}}
	       \else
	        \if#14 \ensuremath{\mbox{\includegraphics[scale=.065]{byusarrow}}}
	         \else
	          \if#15 \ensuremath{\mbox{\includegraphics[scale=.075]{byusarrow}}}
	          \else \ensuremath{\mbox{\includegraphics[scale=.045]{byusarrow}}}
	          \fi 
	         \fi 
	        \fi 
	       \fi 
	      \fi 
	     \fi 
}
\newcommand{\qmarrow}[1]{
	\if#10 \ensuremath{\mbox{\includegraphics[scale=.065]{qarrow}}}
	 \else
	  \if#11 \ensuremath{\mbox{\includegraphics[scale=.075]{qarrow}}}
	   \else
	    \if#12 \ensuremath{\mbox{\includegraphics[scale=.085]{qarrow}}}
	     \else
	      \if#13 \ensuremath{\mbox{\includegraphics[scale=.095]{qarrow}}}
	       \else
	        \if#14 \ensuremath{\mbox{\includegraphics[scale=.105]{qarrow}}}
	         \else
	          \if#15 \ensuremath{\mbox{\includegraphics[scale=.075]{qarrow}}}
	          \else \ensuremath{\mbox{\includegraphics[scale=.045]{qarrow}}}
	          \fi 
	         \fi 
	        \fi 
	       \fi 
	      \fi 
	     \fi 
}
\newtheorem{definition}{Definition}
\newtheorem{theorem}{Theorem}
\newtheorem{example}{Example}
\newtheorem{lemma}{Lemma}
\newtheorem{proposition}{Proposition}
\newtheorem{corollary}{Corollary}
\newtheorem{remark}{Remark}
\newcommand{\ACNF}{\textbf{ACNF}}
\newcommand{\ACNFtop}{\textbf{ACNF}$^{\text{top}}$}
\newcommand{\CERES}{\textbf{CERES}}
\newcommand{\LK}{\textbf{LK}}
\newcommand{\LKE}{\textbf{LKE}}
\newcommand{\LKS}{\textbf{LKS}}
\newcommand{\SU}{\leq_{ss}}
\newcommand{\cL}{\textit{cl}}
\begin{document}

\maketitle
\begin{abstract}
Proof schemata are a variant of \LK-proofs able to simulate various induction schemes in first-order logic by adding so called proof links to the standard first-order \LK-calculus. Proof links allow proofs to reference proofs thus giving proof schemata a recursive structure. Unfortunately, applying {\em reductive} cut-elimination is non-trivial in the presence of proof links. Borrowing the concept of lazy instantiation from functional programming, 
we evaluate proof links locally allowing reductive cut-elimination to proceed past them. Though, this method cannot be used to obtain cut-free proof schemata, we nonetheless obtain important results concerning the schematic
\CERES\ method, that is a method of cut-elimination for proof schemata based on resolution. 
In ``Towards a clausal analysis of cut-elimination'', it was shown that reductive cut-elimination transforms a given \LK-proof in such a way that a subsumption relation holds between the pre- and post-transformation {\em characteristic clause sets}, i.e. the clause set representing the cut-structure of an \LK-proof. Let $\mathit{CL}(\varphi ')$ be the characteristic clause set of a normal form $\varphi '$ of an \LK-proof $\varphi$ that is reached by performing reductive cut-elimination on $\varphi$ without atomic cut elimination. Then $\mathit{CL}(\varphi ')$ is subsumed by all characteristic clause sets extractable from any application of reductive cut-elimination to $\varphi$. Such a normal form is referred to as an \ACNFtop\ and plays an essential role in methods of cut-elimination by resolution. These results can be extended to proof schemata through our ``lazy instantiation'' of proof links, and provides an essential step toward a complete cut-elimination method for proof schemata. 
\end{abstract}

\section{Introduction}
The {\em schematic} \CERES\ (\textbf{C}ut \textbf{E}limination by 
\textbf{RES}olution) method of cut-elimination 
was developed for a primitive recursively defined first-order sequent calculus, the 
\LKS-calculus~\cite{CERESS}. Note that \LKS-proofs have a {\em free parameter}, which 
when instantiated results in an \LK-proof. The method is based on \CERES\ which is 
an alternative to the reductive cut-elimination method of Gentzen~\cite{Gentzen1935}. 
It relies on the extraction of a {\em characteristic clause set} $C$ from a sequent 
calculus proof $\varphi$, where $C$ represents the global cut structure of $\varphi$. 
$C$ is always unsatisfiable and thus, refutable using resolution. A refutation $R$ 
of $C$ can be used to construct a proof $\varphi'$ ending with the same sequent as 
$\varphi$, $\ES{\varphi}$, but containing only atomic cuts. Note that 
removal of atomic cuts is computationally inexpensive.  The formulas of 
$\ES{\varphi}$ can be proven by {\em proof projections}, which are small cut-free proofs algorithmically extracted  from $\varphi$. Note that proof projections also contain ancestors of the cut formulas. That is the end sequents of the proof 
projections are of the form $\ES{\varphi}\cup c$ for some $c\in C$. 
Attaching the projections to the leaves of $R$ and adding the necessary contractions 
results in $\varphi'$. Benefits of the \CERES\ method are its 
non-elementary speed-up over the Gentzen method~\cite{Baaz:2013:MC:2509679} and its ``global'' approach to the elimination of cuts. A global approach is essential for analysis of proofs with an inductive argument.

The \LKS-calculus is based on {\em propositional schemata} introduced by Aravantinos 
\textit{et al.}~\cite{Aravantinos2009,Aravantinos:2011:DUR:2016945.2016961}. 
Propositional schemata have been used in the field of inductive theorem 
proving~\cite{Aravantinos:2013:RCF:2594934.2594935} to define more expressive 
classes of inductive formula whose satisfiability is decidable. The proofs of the 
\LKS-calculus are used to construct {\em proof schemata} which are denoted using a 
finite ordered list of {\em proof schema components}. The components consist of a 
{\em proof symbol}, an \LKS-proof and an \LKE-proof. The \LKE-calculus is essentially 
the \LK-calculus with a rudimentary equational rule for dealing with recursively 
defined functions and predicates. In addition, \LKS-proofs are allowed to ``call'' other proofs through {\em proof links}. To maintain well foundedness, a constraint was added to proof schemata that an \LKS-proof can only contain proof links to proof schema components lower in the ordering. Also, no ``free'' proof links are allowed, 
i.e.\ proofs links must point to proofs within its proof schema. It was shown that 
proof schemata, under these constraints, can be used to express certain inductive 
arguments~\cite{CERESS}. One way to think about proof schemata is as a countable set of 
\LK-proofs. Thus, cut-elimination would be the elimination of cuts from a countable set of proofs 
simultaneously. One can imagine why such a problem would benefit from a global approach. 

Induction has been a stumbling block for classical cut-elimination methods, i.e.\ 
reductive 
cut-elimination \`{a} la Gentzen, because passing cuts over the inference rule 
for induction can be unsound. Note that this is still an issue for the  more exotic 
(non-reductive) methods such as \CERES\ because the cut ancestor relation is not 
well defined~\cite{CERESS}. Though, there have been systems developed to get around 
the issue, such as the work of Mcdowell and 
Miller~\cite{Mcdowell97cut-eliminationfor}, and Brotherston and 
Simpson~\cite{4276551}. These systems manage to eliminate cuts but without some
of the benefits of cut-elimination like the 
{\em subformula property}, i.e.  every formula occurring in the derivation is a 
subformula of some formula in the end sequent. This property admits the construction 
of {\em Herbrand sequents} and other objects which are essential in proof analysis.
Proof schemata preserve the ancestor relation by representing an inductive argument
through recursion indexed by a free parameter. Though, not completely satisfactory
for reductive methods, i.e.\ pushing cuts through proof links is non-trivial, the 
preservation of the ancestor relation benefits a more global approach to the problem. 
The schematic \CERES\ method~\cite{CERESS} takes advantage of this property and
results in a ``proof''  with the subformula property, though for a smaller class 
of inductive arguments than other methods. Problematically, the application of the 
method to proof schemata is not trivial. 

A method for the algorithmic extraction of the characteristic clause set from 
proof schemata exists, but like the proof schemata it is primitive recursively 
defined. As shown in~\cite{DBLP:conf/cade/CernaL16} refuting such a clause set is 
non-trivial even for a simple mathematically meaningful statement. This analysis was 
performed with the aid of automated theorem provers. Part of the issue is that 
the refutation must also be recursive. To simplify the construction of a 
recursive refutation, the substitutions are separated from the resolution rule and 
are given their own recursive definition. Essentially, we define the structure 
separately from the term instantiations.  Thus, the result of the method is a 
{\em substitution schema} and a {\em resolution refutation schema}, both having 
a free parameter, which when instantiated can be combined to construct an 
\LK-derivation. An \LK-proof is not constructed being that schematic proof projections 
are not extracted. Nonetheless, the resulting \LK-derivation contains enough 
information to allow extraction of a {\em Herbrand system}~\cite{CERESS}, that 
is a schematic version of a Herbrand sequent. Though, this separation simplifies 
the definitions and construction it requires a mapping to exists between the two 
schemata, thus, more complex refutations/term instantiation become difficult to 
represent using the method. This issue was highlighted in~\cite{MyThesis}. Thus,
finding a canonical structure~\cite{WolfThesis} for resolution refutations of 
proof schemata would remove the problem of mapping the substitutions to the 
refutation structure.

It has been shown that  characteristic clause sets extracted from  \LK-proofs are 
of a specific shape~\cite{BAAZ2006a,Baaz:2013:MC:2509679,WolfThesis}, thus 
resulting in interesting clausal subsumption properties. If we consider an 
alternative reductive cut-elimination method, which does not 
eliminate atomic cuts, it is possible to prove the existence of a subsumption relation
between the characteristic clause sets extracted at different steps during the application of 
reductive cut-elimination to an \LK-proof~\cite{BAAZ2006a,Baaz:2013:MC:2509679}. The 
characteristic clause set of the final proof, all atomic cuts as high as possible in 
the proof structure, is subsumed by all the extracted clauses sets from previous 
steps~\cite{BAAZ2006a,Baaz:2013:MC:2509679}. Essentially, there is a canonical 
clause set associated with the \CERES\ method~\cite{WolfThesis}. What is most important 
about this canonical clause set is that there is only one way to refute it. The structure
 of this refutation is recursive and fits perfectly into the current language for representing
schematic resolution refutations~\cite{ConduThesis}. Problematically, we do not know 
if the subsumption properties hold for schematic characteristic clause sets. In 
the case of propositional proof schema, a weakening rule was added to the 
resolution calculus allowing the construction of the so called 
{\em Top Atomic Cut Normal Form (\ACNFtop)}~\cite{ConduThesis,WolfThesis}. Note that 
resolution for propositional logic does not require substitutions and thus this 
provides a decidable and complete method for propositional proof schemata. 
While it seems intuitive that the result would carry over, the original clausal 
analysis of first-order sequent calculus proofs used the Gentzen 
method~\cite{Gentzen1935} to construct the subsumption relation between clause sets. As we have
already pointed out, reductive cut-elimination and proof links do not get along.

In this paper we develop a proof schema transformation which allows one to 
``unroll'' a proof link without instantiating the free parameter. The transformation 
preserves syntactic equivalence insomuch as for every instantiation of the 
resulting proof schema there is an instantiation of the original proof schema 
which results in an identical \LK-proof. This transformation allows us to 
perform the Gentzen method on proof schemata because every time we get to a 
proof link we apply the transformation. Though, this does not result in a method 
which leads to cut-free proofs, it allows us to extend the clausal analysis results 
of~\cite{BAAZ2006a,Baaz:2013:MC:2509679} to proof schemata and allows us to use the clause 
set of the {\em Top Atomic Cut Normal Form (\ACNFtop)}. This implies that we can 
use the schematic resolution refutation structure discussed in~\cite{ConduThesis} 
for every schematic characteristic clause set.   


This work is part of a research program into computational proof analysis, which, 
so far, has lead to 
the analysis of F\"{u}rstenberg's proof concerning the 
infinitude of primes~\cite{Baaz:2008:CAF:1401273.1401552}. The proof 
includes an inductive argument and as mentioned earlier the standard \CERES\ method 
cannot be applied. A recursive formalization of the proof was constructed  allowing for a informal clausal proof analysis and the discovery of a relationship between Euclid's proof and  F\"{u}rstenberg's proof. Though, the theory of proof schemata is not yet sufficiently developed to handle such a complex 
proof formally, the results of this paper greatly advances the state of the art in the 
field. Though, investigations of proof schemata have so far been driven by the 
above research program, the applications are not limited to this particular type 
of proof analysis. They also serve as a compact way to store a formal proof 
whilst allowing analysis and transformations to take place. We plan to 
investigate alternative uses of the formalism in future work.  

The rest of this paper is as follows: In Section~\ref{sec:prelim} we discuss the necessary background knowledge needed for the results. In Section~\ref{sec:lazyIN} we introduce the concept of {\em lazy instantiation}. In Section~\ref{sec:clausAnalProofSch} we use lazy instantiation to perform clause analysis of proof schema and show that every schematic clause set can be subsumed by a sequence of clause sets in top form corresponding to the instantiations of the schematic clause set.  In Section~\ref{sec:Conclusion}, we conclude the paper and discuss future work and open problems. 

\section{Preliminaries}
\label{sec:prelim}
Due to the maturity of Gentzen's reductive cut-elimination and the sequent calculus we refrain from giving an introduction to the material. Unconventional 
uses of the method and/or calculus will be addressed when necessary to understanding. 
For more details, there exists numerous publications addressing his results 
(\cite{Baaz:2013:MC:2509679,Gentzen1935,prooftheory} to name a few) as well as the 
basics of the sequent calculus. One point we would like to address concerning reductive cut-elimination 
as considered in this paper is that we will use the generalized rewrite system introduced in~\cite{BAAZ2006a} allowing application of the rewrite rules to any cut in the proof, not just to the upper-most cut. This generalized rewrite rule system $\mathcal{R}$ will be referred to as {\em reductive} cut-elimination. When an application of reductive cut-elimination must be made explicit, we write $\psi \rightarrow_{\mathcal{R}} \chi $ where $\rightarrow_{\mathcal{R}}$ is a binary relation on \LK-derivations and $\psi$ and $\chi$ are  \LK-derivations. The meaning of $\psi \rightarrow_{\mathcal{R}} \chi $ is $\psi$ can be transformed into $\chi$ using the rewrite rule system  $\mathcal{R}$. By $\rightarrow_{\mathcal{R}}^{*}$, we mean the reflexive and transitive closure of $\rightarrow_{\mathcal{R}}$. 

\subsection{The  Atomic Cut Normal Form (\textit{top}) 
\label{sec:TANCF}
(\textbf{ACNF}$^{(\text{top})}$) }
The Atomic Cut Normal Form of an \LK-calculus proof was introduced in~\cite{CERES}\ as 
the result of the \CERES\ method. A proof transformed to 
\ACNF\ still has cuts, but only rank reduction has to be applied to the proof to get 
a truly cut-free proof, i.e.\ all cuts are atomic. Rank reduction operations are 
the cheapest reduction rules of $\mathbf{R}$ which may be applied to an \LK-proof, thus, 
a proof in \ACNF\ can be thought of as essentially cut-free. In this work, we consider 
a special \ACNF, the \ACNFtop.  An \ACNFtop\ is an \ACNF\ where all cuts are shifted 
to the top of the proof. An easy way to transform a given proof $\varphi$ into 
\ACNFtop\ is to apply generalized reductive cut-elimination  
without the following transformation rules

\begin{center}
\begin{tabular}{cc}
\begin{minipage}{0.45\textwidth}
\begin{prooftree}
\AxiomC{$A\vdash A$}
\AxiomC{$\psi_1$}
\noLine
\UnaryInfC{$A,\Gamma\vdash\Delta$}
\RightLabel{cut}
\BinaryInfC{$A,\Gamma\vdash\Delta$}
\end{prooftree}
\end{minipage} &
\begin{minipage}{0.45\textwidth}
\begin{prooftree}
\AxiomC{$\psi_2$}
\noLine
\UnaryInfC{$\Gamma\vdash\Delta ,A$}
\AxiomC{$A\vdash A$}
\RightLabel{cut}
\BinaryInfC{$\Gamma\vdash\Delta ,A$}
\end{prooftree}
\end{minipage} \\
 &  \\
$\Downarrow$ & $\Downarrow$ \\
\begin{minipage}{0.5\textwidth}
\begin{prooftree}
\AxiomC{$\psi_1$}
\noLine
\UnaryInfC{$A,\Gamma\vdash\Delta$}
\end{prooftree}
\end{minipage} &
\begin{minipage}{0.5\textwidth}
\begin{prooftree}
\AxiomC{$\psi_2$}
\noLine
\UnaryInfC{$\Gamma\vdash\Delta ,A$}
\end{prooftree}
\end{minipage} \\
 & 
\end{tabular}
\end{center}
being applied to $\varphi$. A full explanation can be found in \cite{BAAZ2006a,WolfThesis}. 

\subsection{Proof Schemata: Language, Calculus, and Interpretation}

To give a formal construction of proof schemata we extend the classical first-order term language. Following
the syntax of a to be published paper concerning proof schemata \cite{schema2017Journal}, we partition the set of function symbols $\mathbf{P}$ into two categories, {\em uninterpreted function symbols} $\mathbf{P_{u}}$ and {\em defined function symbols} $\mathbf{P_{d}}$. Together with a countable set of {\em schematic variable symbols} these symbols are used to construct the $\iota$ sort of the term language. We will denote  defined function symbol using $\hat{\cdot}$ in order to distinguish them from uninterpreted function symbols. Defined function symbols are added to the language to allow the definition of primitive recursive functions within the object language. Analogously, we allow {\em uninterpreted predicate symbols} and {\em defined predicate symbols}. 

In addition to the $\iota$ sort,  we add an $\omega$ sort where every term normalizes to numerals (hence, 
the only uninterpreted function symbols are $0$ and $s(\cdot)$). We will denote numerals by lowercase Greek 
letters, i.e. $\alpha$, $\beta$, $\gamma$, etc. There is also a countable set $\mathcal{N}$ of {\em 
parameter} symbols of type $\omega$. For this work, we will only need a single parameter which in most cases 
we denote by $n$. We use  $k,k'$ to represent $\omega$-terms containing the parameter. This parameter 
symbol, referred to as the {\em free parameter},  is used to index \LKS-derivations. The set of all 
$\omega$-terms will be denoted by $\fat{\Omega}$. Schematic variables are of type $\omega\to\iota$, i.e.\ 
when the schematic variable $x$ is instantiated by a numeral $\alpha$, $x(\alpha)$, it is to be interpreted 
as a first-order variable of type $\iota$. Hence, schematic variable symbols describe infinite sequences of 
distinct variables.

 Formula schemata (a generalization of formulas including defined functions and predicates) are defined as usual by induction starting with a countable set of 
uninterpreted predicate symbols and defined predicate symbols. To deal with defined functions and predicates, we add to our interpretation a set of convergent rewrite rules, $\mathcal{E}$, of the form $\hat{f}(\bar{t}) \rightarrow E$, where $\bar{t}$ contains no defined symbols, and either $\hat{f}$ is a function symbol of range $\iota$ and $E$ is a term or $\hat{f}$ is a predicate symbol and E is a formula. We extend the \LK-calculus with an inference rule applying this set of rewrite rules as an equational theory. 
This extended calculus is referred to as \LKE.

\begin{definition}[\LKE]
Let $\mathcal{E}$ be a given equational theory. \LKE\ is an extension of \LK\ with 
the $\mathcal{E}$ inference rule
\begin{small}\begin{prooftree}
\AxiomC{$S(t)$}
\RightLabel{$\mathcal{E}$}
\UnaryInfC{$S(t')$}
\end{prooftree}\end{small}
where the term $t$ in the sequent $S$ is replaced by a term $t'$ for 
$\mathcal{E}\models t=t'$.
\end{definition}

\begin{example}
We can add iterated  $\lor$ and $\land$ ( the defined predicates are abbreviated 
as $\bigvee$ and $\bigwedge$ ) to the term language using the following rewrite rules: 
\begin{align*}
\bigvee_{i=0}^0P(i)\equiv & \bigwedge_{i=0}^0P(i)\equiv 
P(0), &
\bigvee_{i=0}^{s(y)}P(i)\equiv & \bigvee_{i=0}^{y}P(i)\lor P(s(y)), &
\bigwedge_{i=0}^{s(y)}P(i)\equiv &\bigwedge_{i=0}^{y}P(i)\land P(s(y))
\end{align*}

\end{example}

Formula schemata are used to build {\em schematic sequents} $\Delta\vdash \Gamma$, where  $\Delta$ and $\Gamma$ are multi-sets of formula schemata. Schematic sequents are to be interpreted in the standard way.  Note that the size of $\Delta$ and $\Gamma$ cannot depend on the free parameter $n$. 

Schematic proofs are a finite ordered list of {\em proof schema components} that can 
interact with each other. This interaction is defined using so-called 
\emph{proof links}, a 0-ary inference rule we add to \LKE-calculus: Let 
$S(k,\bar{x})$ be a sequent where $\bar{x}$ is a
vector of schematic variables.  By 
$S(k,\bar{t})$ we denote $S(k,\bar{x})$ where $\bar{x}$ is replaced by $\bar{t}$ 
respectively, where $\bar{t}$ is a vector of terms of appropriate type. 
Furthermore, we assume a countably infinite set of \emph{proof symbols} 
$\mathcal{B}$ denoted by $\varphi ,\psi ,\varphi_i,\psi_j$. The expression 

\begin{prooftree}
\AxiomC{$(\varphi,k,\bar{t})$}
\dashedLine
\UnaryInfC{$S(k,\bar{t})$}
\end{prooftree}

is called a proof link with the intended meaning that there is a proof called $\varphi$ with the
end-sequent $S(k,\bar{x})$. For  $k\in \fat{\Omega}$, let $\mathcal{V}(k)$ be the set of parameters in $k$.  We refer to a proof link as an \emph{$E$-proof link} if $\mathcal{V}(k) \subseteq E$. Note that in this work $E = \left\lbrace n \right\rbrace$ or $E = \emptyset$. 

\begin{definition}[\LKS]
\LKS\ is an extension of \LKE, where proof links may appear at the leaves of a proof. 
\end{definition}

The class of all \LKS-proofs will be denoted by $\lyus{2}$ (read \textit{little 
yus}). For two \LKS-proofs (\LKE-proof) $\varphi$ and $\psi$, we say that $\varphi \equiv_{syn} \psi$ if they are syntactically the same.  We will denote a \textit{sub}\LKS-proof (\LKE-proof) $\nu$ of an 
\LKS-proof (\LKE-proof) proof $\varphi$ by $\varphi.\nu$. If the 
\textit{sub}\LKS-proof $\varphi.\nu$ is a $\left\lbrace n \right\rbrace$-proof link $(\psi,k,\overline{x})$ 
then we define $\mathcal{P}(\varphi.\nu) = \psi$ and 
$\mathcal{I}(\varphi.\nu) = k$. If $\varphi.\nu$ is not a proof link $\mathcal{P}(\varphi.\nu)$ and $\mathcal{I}(\varphi.\nu)$ do nothing. The set of 
proof links in an \LKS-proof $\varphi$ is $\mathit{PL}(\varphi)$. We also define 
substitution of subproofs, that is let $\varphi,\psi$ be \LKS-proof and 
$\varphi.\nu$ a \textit{sub}\LKS-proof such that $\ES{\psi} = 
\ES{\varphi.\nu}$, 
then we can construct a new \LKS-proof $ \varphi\left[\psi \right]_{\nu}$ where 
$\varphi.\nu$ is replaced by $\psi$. We also allow term substitutions for the parameters of an \LKS-proof. 

\begin{definition}
Let $\nu \in \lyus{2}$ be an \LKS-proof. A {\em parameter substitution} is a term substitution $\sigma: \mathcal{N} \rightarrow  \fat{\Omega}$, such that $\nu\sigma$ is a new \LKS-proof with the substitutions applied to every $\omega$-term containing a parameter.  
\end{definition}

\begin{definition}[Proof Schema Component]
Let $\psi\in \mathcal{B}$ and $n\in \mathcal{N}$. A \emph{proof schema 
component} $\mathbf{C}$  is a triple $(\psi,\pi ,\nu(k))$  where $\pi$ is an 
\LKS-proof only containing $\emptyset$-proof links and $\nu(k)$ 
is an \LKS-proof containing $\left\lbrace n\right\rbrace $-proof links. The end-sequents of the proofs are  $S(0,\bar{x})$ and $S(k,\bar{x})$,
respectively. Given a proof schema 
component $\mathbf{C}=(\psi,\pi ,\nu(k))$ we define $\mathbf{C}.1 = \psi$, 
$\mathbf{C}.2 = \pi$, 
and  $\mathbf{C}.3 = \nu(k)$. 
\end{definition}

If $\nu(k)$ of a proof schema component $(\psi,\pi ,\nu(k))$  contains a proof link to $\psi$ it will be referred to as {\em cyclic}, otherwise it is {\em acyclic}. 


\begin{definition}[Proof Schema \cite{CERESS}]
Let $\mathbf{C}_1,\cdots, \mathbf{C}_m$ be proof schema components such that 
$\mathbf{C}_i.1$ is distinct for $1\leq i\leq m$ and $n\in \mathcal{N}$. Let the  end sequents of $\mathbf{C}_1$ be $S(0,\bar{x})$ and $S(k,\bar{x})$. We define  $\Psi 
= \left\langle \mathbf{C}_1, \cdots, \mathbf{C}_m \right\rangle $ as a 
{\em proof schema} if 
$\mathbf{C}_i.3$ only contains $\left\lbrace n \right\rbrace $-proof links to $\mathbf{C}_i.1$ or 
$\mathbf{C}_j.1$ for $1\leq i < j \leq m$. The $\left\lbrace n \right\rbrace $-proof links are of the 
following form: 

\begin{minipage}{.45\textwidth}
\begin{prooftree}
\AxiomC{$(\mathbf{C}_i.1, k',\bar{a})$}
\dashedLine
\UnaryInfC{$S'(k',\bar{a})$}
\end{prooftree}
\end{minipage}
\begin{minipage}{.45\textwidth}
\begin{prooftree}
\AxiomC{$(\mathbf{C}_j.1, t,\bar{b})$}
\dashedLine
\UnaryInfC{$S''(t,\bar{b})$}
\end{prooftree}
\end{minipage}

where $t\in \fat{\Omega}$ such that $\mathcal{V}(t)\in \left\lbrace n \right\rbrace$, $k'$ is a sub-term of $k$, and $\bar{a}$ and 
$\bar{b}$ are vectors of terms from the appropriate sort. $S'(k',\bar{a})$ 
and $S''(t,\bar{b})$ are the end sequents of components $\mathbf{C}_i$ and 
$\mathbf{C}_j$ respectively.  We call $S(k,\bar{x})$ the end sequent 
of $\Psi$ and assume an identification between the formula occurrences in the end 
sequents of the proof schema components so that we can speak of occurrences in the 
end sequent of $\Psi$. The class of all proof schemata will be denoted by $\byus{2}$ (read \textit{big 
yus}). 
\end{definition}

For any proof schema $\Phi\in\byus{2}$, such that $\Phi= \left\langle 
\mathbf{C}_1, \cdots, \mathbf{C}_m \right\rangle $ we define $|\Phi|= m$ and 
$\Phi.i = \mathbf{C}_i$ for $1\leq i \leq m$. Note that instead of using {\em proof schema pair}~\cite{CERESS,schema2017Journal} to define proof schemata we use proof schema components. The only difference is that 
proof schema components make the name explicit. All results concerning proof schemata built from proof schema pairs carry over for our above definition. 

\begin{definition}
Let $\Phi\in\byus{2}$ such that $\Phi$ contains $(\chi ,\pi,\nu(k))$. Let $\nu.\gamma \in PL(\nu(k))$. We call  $\nu.\gamma$ an \emph{incoming link} if $\mathcal{P}(\nu(k).\gamma ) = \chi $, otherwise, we call it an \emph{outgoing link}.
\end{definition}

\begin{example}
\label{example.1}
Let us consider the proof schema $\Phi = \left\langle (\varphi,\pi,\nu (k)) \right\rangle$. The proof schema uses one defined function symbol $\hat{S}(\cdot)$ which is used to convert terms of the $\omega$ sort to the $\iota$ sort, i.e. $\mathcal{E} = \left\lbrace \hat{S}(k+1) = s(\hat{S}(k)) \ ; \ \hat{S}(0) = 0 \ ; \ k+s(l)=s(k+l)\right\rbrace $. The proofs $\pi$ and $\nu (k)$ are as follows: 
\begin{flushleft}
\begin{minipage}{.2\textwidth}
$\pi =$
\end{minipage}
\begin{minipage}{.7\textwidth}
\begin{small}
\begin{prooftree}
\AxiomC{$P(\alpha+0) \vdash P(\alpha +0)$}
\RightLabel{$w\colon l$}
\UnaryInfC{$P(\alpha+0) ,\forall x.P(x)\to P(s(x)) \vdash P(\alpha +0)$}
\RightLabel{$\mathcal{E}$}
\UnaryInfC{$P(\alpha +0),\forall x.P(x)\to P(s(x)) \vdash P(\alpha +\hat{S}(0))$}
\end{prooftree}
\end{small}
\end{minipage}
\end{flushleft}
\begin{flushleft}
\begin{minipage}{.07\textwidth}
$\nu (k) =$
\end{minipage}
\begin{minipage}{.9\textwidth}
\begin{small}
\begin{prooftree}
\AxiomC{$(\varphi ,n,\alpha)$}
\dashedLine
\UnaryInfC{$P(\alpha+0),\forall x.P(x)\to P(s(x)) \vdash P(\alpha + \hat{S}(n) )$}
\AxiomC{$P(s(\alpha + \hat{S}(n))) \vdash P(s(\alpha + \hat{S}(n)))$}
\RightLabel{$\to\colon l$}
\BinaryInfC{$P(\alpha+0),\forall x.P(x)\to P(s(x)) ,P(\alpha + \hat{S}(n) )\to P(s(\alpha + \hat{S}(n) )) 
\vdash P(s(\alpha + \hat{S}(n) ))$}
\RightLabel{$\forall\colon l$}
\UnaryInfC{$P(\alpha+0),\forall x.P(x)\to P(s(x)) ,\forall x.P(x)\to P(s(x)) \vdash P(s(\alpha + \hat{S}(n) ))$}
\RightLabel{$\mathcal{E}$}
\UnaryInfC{$P(\alpha+0),\forall x.P(x)\to P(s(x)),\forall x.P(x)\to P(s(x)) \vdash  P(\alpha + s(\hat{S}(n)))$}
\RightLabel{$\mathcal{E}$}
\UnaryInfC{$P(\alpha+0),\forall x.P(x)\to P(s(x)),\forall x.P(x)\to P(s(x)) \vdash P(\alpha + \hat{S}(n+1))$}
\RightLabel{$c\colon l$}
\UnaryInfC{$P(\alpha+0),\forall x.P(x)\to P(s(x)) \vdash P(\alpha + \hat{S}(n+1))$}
\end{prooftree}
\end{small}
\end{minipage}
\end{flushleft}
Note that $\pi$ contains no proof links, while $\nu (k)$ contains $\{ n\}$-proof links.
\end{example}

For our clausal analysis, we need to consider a specific type of proof schema which we will refer to as an {\em accumulating proof schema}. For a given proof schema we can easily construct the corresponding accumulating proof schema by adding a new proof component to the beginning of the proof schema. The additional component allows incremental unfolding of the proof schema (see Section \ref{sec:lazyIN}).

\begin{definition}
\label{def:AccSch}
Let $\Phi = \left\langle \mathbf{C}_1, \cdots, \mathbf{C}_m \right\rangle \in \byus{2}$. We define the accumulating proof schema of $\Phi$, as $\Phi^{a} = \left\langle A, \mathbf{C}_1, \cdots, \mathbf{C}_m \right\rangle$, where $A = \left(\chi , \pi , \nu(k) \right)$, $\chi$ is a distinct proof symbol from those of $\Phi$, $\pi = \mathbf{C}_{1}.2$, and $\nu(k)$ is defined as follows: 
\begin{center}
\begin{small}\begin{prooftree}
\AxiomC{$\begin{array}{c} (\mathbf{C}_1.1,k,\bar{t})\end{array}$}
\dashedLine
\UnaryInfC{$\begin{array}{c} S(k,\bar{t})\end{array}$}
\end{prooftree}\end{small}
\end{center}
where $S(k,\bar{t})$ is the end sequent of $\mathbf{C}_{1}.3$. 
\end{definition} 

\begin{definition}[Evaluation of proof schema~\cite{CERESS}]
\label{def:eval}
We define the rewrite rules for proof links 
\begin{flushleft}
\begin{minipage}{.45\textwidth}
\begin{small}\begin{prooftree}
\AxiomC{$(\varphi ,0,\bar{t})$}
\dashedLine
\RightLabel{$\Rightarrow\pi$}
\UnaryInfC{$S(0,\bar{t})$}
\end{prooftree}\end{small}
\end{minipage}
\begin{minipage}{.45\textwidth}
\begin{small}\begin{prooftree}
\AxiomC{$(\varphi ,k,\bar{t})$}
\dashedLine
\RightLabel{$\Rightarrow\nu (k)$}
\UnaryInfC{$S(k,\bar{t})$}
\end{prooftree}\end{small}
\end{minipage}
\end{flushleft}
for all proof schema components $\mathbf{C}=(\varphi ,\pi ,\nu (k))$. We define $\mathbf{C}\downarrow_{\alpha}$ as a normal form of 
\begin{small}\begin{prooftree}
\AxiomC{$(\varphi ,\alpha,\bar{t})$}
\dashedLine
\UnaryInfC{$S(\alpha,\bar{t})$}
\end{prooftree}\end{small}
where $\alpha\in \mathbb{N}$, under the rewrite system just given extended with rewrite rules for defined function 
and predicate symbols. Further, we define $\Phi\downarrow_{\alpha} = 
\mathbf{C}_1\downarrow_{\alpha}$ for a proof schema $\Phi =\langle\mathbf{C}_1,\ldots 
,\mathbf{C}_m\rangle$. 
\end{definition}

\begin{example}
Let $\Phi$ be the proof schema of example \ref{example.1} and $\Phi^a$ be the corresponding accumulating proof schema. For $1\in\mathbb{N}$ we can write down $\Phi^a\downarrow_1$:
\begin{center}
\begin{small}
\begin{prooftree}
\AxiomC{$P(s(\alpha + S(0))) \vdash P(s(\alpha + S(0)))$}
\AxiomC{$P(\alpha + 0)\vdash P(\alpha + 0)$}
\RightLabel{$w\colon l$}
\UnaryInfC{$P(\alpha + 0) ,\forall x.P(x)\to P(s(x)) \vdash P(\alpha + 0)$}
\RightLabel{$\mathcal{E}$}
\UnaryInfC{$P(\alpha+0),\forall x.P(x)\to P(s(x)) \vdash P(\alpha + S(0))$}
\RightLabel{$\to\colon l$}
\BinaryInfC{$P(\alpha+0),\forall x.P(x)\to P(s(x)) ,P(\alpha + S(0) )\to P(s(\alpha + S(0) )) 
\vdash P(s(\alpha + S(0)))$}
\RightLabel{$\forall\colon l$}
\UnaryInfC{$P(\alpha+0),\forall x.P(x)\to P(s(x)) ,\forall x.P(x)\to P(s(x)) \vdash P(s(\alpha + S(0) ))$}
\RightLabel{$\mathcal{E}$}
\UnaryInfC{$P(\alpha+0),\forall x.P(x)\to P(s(x)),\forall x.P(x)\to P(s(x)) \vdash  P(\alpha + s(S(0)))$}
\RightLabel{$\mathcal{E}$}
\UnaryInfC{$P(\alpha+0),\forall x.P(x)\to P(s(x)),\forall x.P(x)\to P(s(x)) \vdash P(\alpha + S(0+1))$}
\RightLabel{$c\colon l$}
\UnaryInfC{$P(\alpha +0),\forall x.P(x)\to P(s(x)) \vdash P(\alpha + S(0+1))$}
\RightLabel{$\mathcal{E}$}
\UnaryInfC{$P(\alpha +0),\forall x.P(x)\to P(s(x)) \vdash P(\alpha + S(1))$}
\end{prooftree}
\end{small}
\end{center}
\end{example}

\begin{proposition}[Soundness of proof schemata \cite{CERESS}]
\label{prop:sound}
Let $\Phi=\langle\mathbf{C}_1,\ldots ,\mathbf{C}_m\rangle$ be a proof schema, 
and let $\alpha\in \mathbb{N}$. Then there exists a \LK-proof of 
$\mathbf{C}_1\downarrow_{\alpha}$.
\end{proposition}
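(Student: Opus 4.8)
The plan is to argue by induction on $\alpha \in \mathbb{N}$, with a nested (well-founded) induction on the position of the component in the ordering of $\Phi$, peeling off proof links one at a time via the evaluation rewrite system of Definition~\ref{def:eval}. Concretely, I would show the stronger statement that for \emph{every} component $\mathbf{C}_i = (\varphi_i,\pi_i,\nu_i(k))$ of $\Phi$ and every $\alpha\in\mathbb{N}$, the rewrite system of Definition~\ref{def:eval} (extended with the rules for defined function and predicate symbols in $\mathcal{E}$) applied to the one-node derivation consisting of the proof link $(\varphi_i,\alpha,\bar t)$ terminates in a proof object $\mathbf{C}_i\!\downarrow_\alpha$ that contains no proof links, and that this object is a genuine \LKE-proof; one then observes that the $\mathcal{E}$-rule can be discharged because $\mathcal{E}$ is a convergent rewrite system, so each $\mathcal{E}$-inference has both premise and conclusion valid and can be replaced by the corresponding \LK-derivation of the equality instances, yielding an \LK-proof. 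Specializing to $\mathbf{C}_1$ gives the statement.

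The key steps, in order: (1) Fix $\alpha$ and induct on it. In the base case $\alpha = 0$, the rewrite rule $\Rightarrow\pi$ replaces the link $(\varphi_i,0,\bar t)$ by $\pi_i$ with $k\mapsto 0$ substituted and $\bar x\mapsto\bar t$; since $\pi_i$ contains only $\emptyset$-proof links, any links it still contains point to strictly earlier components $\mathbf{C}_j$ with $j<i$, so an inner induction on $i$ (the well-foundedness guaranteed by the ordering constraint in the definition of proof schema, and by the ``no free proof links'' condition) lets us replace each of those by $\mathbf{C}_j\!\downarrow_0$, which by the inner induction hypothesis is already link-free. (2) In the step case, the link $(\varphi_i,\alpha+1,\bar t)$ rewrites via $\Rightarrow\nu_i(k)$ to $\nu_i(k)$ with $k\mapsto\alpha$ and $\bar x\mapsto\bar t$; the $\{n\}$-proof links in $\nu_i(k)$ are, by the proof-schema definition, either \emph{incoming} links to $\varphi_i$ with index a subterm of $k=\alpha+1$ — so of the form $(\varphi_i,\alpha',\bar a)$ with $\alpha' \le \alpha$, handled by the outer induction hypothesis on $\alpha$ — or \emph{outgoing} links to some $\varphi_j$ with $j>i$, handled by a (second) nested induction going \emph{downward} through the list from $m$ to $i$, using that components $\mathbf{C}_m$ has no outgoing links. (3) Check that each rewrite step preserves the end sequent: the $\Rightarrow\pi$ and $\Rightarrow\nu$ rules replace a link with conclusion $S(\alpha,\bar t)$ (resp. $S(\alpha+1,\bar t)$) by a derivation with exactly that end sequent after the appropriate substitution, and substitution of subproofs (as defined in the preliminaries, with the $\ES$ matching condition) is sound; the $\mathcal{E}$-normalization only rewrites terms modulo $\mathcal{E}$ and hence preserves sequents up to $\mathcal{E}$-equality. (4) Finally, eliminate the $\mathcal{E}$-rule: since $\mathcal{E}$ is convergent and $\alpha$ is a concrete numeral, every defined symbol occurring is fully evaluated, so each $\mathcal{E}$-inference $S(t)/S(t')$ with $\mathcal{E}\models t=t'$ can be replaced by a cut against a small \LK-proof of the equational fact (or simply absorbed, since after full evaluation $t$ and $t'$ are syntactically equal). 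This yields a bona fide \LK-proof of $\mathbf{C}_1\!\downarrow_\alpha$.

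The main obstacle is establishing \emph{termination} of the combined rewrite system — i.e. that the recursive unfolding actually bottoms out. This is where the structural constraints on proof schemata do all the work, and the argument must carefully combine three well-founded orders: the natural-number order on $\alpha$ for incoming links (each unfolding strictly decreases the index, since the link index is a proper subterm of $k$), the list order $1 \le i \le m$ for outgoing links (each outgoing link jumps strictly rightward in the list, and the last component is acyclic-by-position), and the convergence of $\mathcal{E}$ for the defined-symbol rules; the subtlety is that unfolding an incoming link at index $\alpha+1$ can re-introduce outgoing links at the \emph{same} $\alpha$, so the lexicographic combination $(\alpha, \text{position})$ — or more precisely $(\alpha, m-i)$ with $\mathcal{E}$-steps handled by a separate terminating subsystem — must be shown to decrease at every step. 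Once termination is secured, confluence of the link-rewrite rules is essentially trivial (the rules are non-overlapping: a given link fires exactly one of $\Rightarrow\pi$, $\Rightarrow\nu$ depending on whether its index is $0$ or a successor), so the normal form $\mathbf{C}_1\!\downarrow_\alpha$ is well-defined and the soundness claim follows. I would expect the bulk of the formal write-up to be the termination bookkeeping, with everything else being routine verification that rewrite steps preserve \LKE-proofhood and end sequents.
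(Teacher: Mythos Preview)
The paper does not give its own proof of this proposition: it is stated with a citation to~\cite{CERESS} and immediately used. There is therefore nothing in the paper to compare your argument against directly.

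That said, your proposal is essentially the standard argument and is sound in outline: termination of the unfolding follows from a lexicographic order combining the numeral index (for self-links) with the component position (for cross-links), and once all links are gone the $\mathcal{E}$-inferences collapse because convergence of $\mathcal{E}$ makes premise and conclusion syntactically identical after normalization. One slip to fix: in your base case you say the $\emptyset$-proof links in $\pi_i$ point to \emph{earlier} components $\mathbf{C}_j$ with $j<i$, but the ordering constraint in the definition of proof schema goes the other way --- links point to $\mathbf{C}_j$ with $j>i$ (you get this right in the step case, where you correctly induct ``downward from $m$ to $i$''). Adjust the base case accordingly, using the same reverse induction on $m-i$. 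Also, the paper's definition only explicitly constrains the links in $\mathbf{C}_i.3=\nu_i(k)$, not those in $\mathbf{C}_i.2=\pi_i$; you are tacitly assuming the same constraint holds for $\pi_i$, which is the intended reading (and is needed for termination), but is worth flagging. Finally, in step~(4) the ``cut against a small \LK-proof of the equational fact'' route does not work in bare \LK\ without equality; the parenthetical alternative --- that after full normalization the $\mathcal{E}$-rule becomes an identity and can simply be deleted --- is the one that actually goes through.
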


If $S(k,\bar{t})$ is the end sequent of $\Phi$ then Proposition~\ref{prop:sound} essentially states that $\mathbf{C}_1\downarrow_{\alpha}$ is an \LK-proof of $S(\alpha,\bar{t})\downarrow$ where by $\downarrow$ we refer to normalization of the defined symbols in $S(\alpha,\bar{t})$. 

\subsection{The Characteristic Clause Set}
The characteristic clause set of an \LK-proof $\varphi$ is extracted by inductively following the formula occurrences of {\em cut  ancestors} up the proof tree to the leaves. The cut ancestors are sub-formulas of any cut in the given proof. However, in the case of proof schemata, the concept of ancestors and formula occurrence is more complex. A formula occurrence might be an ancestor of a cut formula in one recursive call and in another it might not. Additional machinery is necessary to extract the characteristic clause set from proof schemata. A set $\Omega$ of formula occurrences from the end-sequent of an \textbf{LKS}-proof $\psi$ is called {\em a configuration for $\psi$}. A configuration $\Omega$ for $\psi$ is called relevant w.r.t. a proof schema $\Psi$ if $\psi$ is a proof in $\Psi$ and there is a $\gamma \in \mathbb{N}$ such that $\psi$ induces a subproof $\pi\downarrow_\gamma$ of $\Psi \downarrow_\gamma$
such that the occurrences in $\Omega$ correspond to cut-ancestors below $\pi\downarrow_\gamma$~\cite{thesis2012Tsvetan}. By $\pi \downarrow_\gamma$, we mean substitute the free parameter of $\pi$ with $\gamma\in \mathbb{N}$ and unroll the proof schema to an \textbf{LKE}-proof. We note that the set of relevant cut-configurations can be computed given a proof schema $\Psi$. To represent a proof symbol $\varphi$ and configuration $\Omega$ pairing in a clause set we assign a {\em clause set symbol} $cl^{\varphi,\Omega}(k,\bar{x})$ to them, where $k\in \fat{\Omega}$. 

\begin{definition}[Characteristic clause term \cite{CERESS}]
Let $\pi$ be an \LKS-proof and $\Omega$ a configuration. In the following, by 
$\Gamma_\Omega,\Delta_\Omega$ and $\Gamma_C,\Delta_C$ we will denote multisets of 
formulas of $\Omega$- and cut-ancestors respectively. Let $r$ be an inference 
in $\pi$. We define the clause-set term $\Theta_r^{\pi ,\Omega}$ inductively: 
\begin{itemize}
 \item if $r$ is an axiom of the form $\Gamma_\Omega ,\Gamma_C,\Gamma\vdash 
 \Delta_\Omega ,\Delta_C,\Delta$, then $\Theta_r^{\pi ,\Omega}=\{\Gamma_\Omega 
 ,\Gamma_C\vdash\Delta_\Omega ,\Delta_C\}$
 \item if $r$ is a proof of the form 
 \begin{prooftree}
 \AxiomC{$\psi (k,\bar{a})$}
 \dashedLine
 \UnaryInfC{$\Gamma_\Omega ,\Gamma_C,\Gamma\vdash 
 \Delta_\Omega ,\Delta_C,\Delta$}
 \end{prooftree}
 then define $\Omega'$ as the set of formula occurrences from $\Gamma_\Omega 
 ,\Gamma_C\vdash\Delta_\Omega ,\Delta_C$ and $\Theta_r^{\pi ,\Omega}=\cL^{\psi ,\Omega '} 
 (k,\bar{a})$
 \item if $r$ is a unary rule with immediate predecessor $r'$, then 
 $\Theta_r^{\pi ,\Omega}=\Theta_{r'}^{\pi ,\Omega}$
 \item if $r$ is a binary rule with immediate predecessor $r_1,r_2$, then 
 \begin{itemize}
  \item if the auxiliary formulas of $r$ are $\Omega$- or cut-ancestors, then 
  $\Theta_r^{\pi ,\Omega}=\Theta_{r_1}^{\pi ,\Omega}\oplus\Theta_{r_2}^{\pi ,\Omega}$ 
  \item otherwise $\Theta_r^{\pi ,\Omega}=\Theta_{r_1}^{\pi ,\Omega}\otimes 
  \Theta_{r_2}^{\pi ,\Omega}$
 \end{itemize}
\end{itemize}
Finally, define $\Theta^{\pi ,\Omega}=\Theta_{r_0}^{\pi ,\Omega}$, where $r_0$ is the 
last inference of $\pi$, and $\Theta^\pi =\Theta^{\pi ,\emptyset}$. $\Theta^\pi$ is 
called the characteristic term of $\pi$. 
\end{definition}
Note that considering an empty configuration and ignoring the rule for proof links results in the definition of clause set term for \LK-proofs~\cite{CERES}. Clause terms can be evaluate to sets of clauses using $|\Theta^{\pi}|$, that is $|\Theta_1 \oplus \Theta_2| = |\Theta_1| \cup |\Theta_2|$, $|\Theta_1 \otimes \Theta_2| = \{C \circ D \mid C \in |\Theta_1|, D \in |\Theta_2|\}$, where $C\circ D$ is the sequent whose antecedent is the union of the antecedents of $C$ and $D$ and analogously whose succedent is the union of the succedents of $C$ and $D$.

The characteristic clause term is extracted for each proof symbol in a given proof schema $\Psi$, and together they make the {\em characteristic clause set schema} for $\Psi$, $CL(\Psi)$.
\begin{definition}[Characteristic Term Schema\cite{CERESS}]
\label{def:CTS}
Let $\Psi = \left\langle \mathbf{C}_{1},\cdots, \mathbf{C}_{m} \right\rangle $ be a proof schema. We define 
the rewrite rules for clause-set symbols for all proof schema components $\mathbf{C}_{i}$  and 
configurations $\Omega$ as $cl^{\mathbf{C}_{i},\Omega}(0,\overline{u}) \rightarrow \Theta^{\pi_{i},\Omega}$ 
and $cl^{\mathbf{C}_{i},\Omega}(k,\overline{u}) \rightarrow \Theta^{\nu_{i},\Omega}$ where $1\leq i\leq 
m$. Next, let $\gamma\in \mathbb{N}$ and $cl^{\mathbf{C}_{i},\Omega}\downarrow_{\gamma}$ be the normal form 
of $cl^{\mathbf{C}_{i},\Omega}(\gamma,\overline{u})$ under the rewrite system just given extended by rewrite 
rules for defined function and predicate symbols. Then define $\Theta^{\mathbf{C}_{i},\Omega} = 
cl^{\mathbf{C}_{i},\Omega}$ and $\Theta^{\Psi,\Omega} = cl^{\mathbf{C}_{1},\Omega}$ and finally the 
characteristic term schema $\Theta^{\Psi} =  \Theta^{\Psi,\emptyset}$.
\end{definition}

Note that, concerning the characteristic clause schema of a proof schema $\Phi$ the following essential proposition holds. 

\begin{proposition}[\cite{CERESS,schema2017Journal}]
\label{prop:clauseequal}
Let $\gamma\in \mathbb{N}$. Then $\Theta^{\Phi}\downarrow_{\gamma} \equiv_{syn} \Theta(\Phi \downarrow_{\gamma})$
\end{proposition}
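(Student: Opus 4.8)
The plan is to show that characteristic-term extraction commutes with the unfolding of proof links, so that the two occurrences of $\downarrow_\gamma$ in the statement — the one on proof schemata from Definition~\ref{def:eval} and the one on characteristic term schemata from Definition~\ref{def:CTS} — can be matched step by step. The proposition will then follow by specialising a generalised claim: for every proof schema component $\mathbf{C}_i=(\varphi_i,\pi_i,\nu_i(k))$ of $\Phi$, every configuration $\Omega$, every term vector $\bar{a}$ of appropriate sort, and every $\gamma\in\mathbb{N}$, the normal form of $\cL^{\mathbf{C}_i,\Omega}(\gamma,\bar{a})$ satisfies $\cL^{\mathbf{C}_i,\Omega}(\gamma,\bar{a})\downarrow \equiv_{syn} \Theta^{\rho,\Omega}$, where $\rho$ is the \LKE-proof obtained by unrolling the proof link $(\varphi_i,\gamma,\bar{a})$ as in Definition~\ref{def:eval} and $\Theta^{\rho,\Omega}$ is the characteristic term of $\rho$ computed with the occurrences of $\Omega$ in its end sequent treated as cut-ancestors. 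Taking $i=1$, $\Omega=\emptyset$ and $\bar{a}=\bar{t}$ gives the proposition, since $\Phi\downarrow_\gamma=\mathbf{C}_1\downarrow_\gamma$ and $\Theta^\Phi\downarrow_\gamma=\cL^{\mathbf{C}_1,\emptyset}\downarrow_\gamma$.

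I would prove the generalised claim by a well-founded induction that follows the (terminating) rewrite sequence computing the normal form, simulating it on the clause-term side. The core is a one-step simulation lemma: if a partially unrolled derivation $P$ arising during the evaluation of $(\varphi_i,\gamma,\bar{a})$ rewrites to $P'$ by one evaluation step, then $\Theta^{P,\Omega}$ rewrites — using the clause-set-symbol rules of Definition~\ref{def:CTS} together with the rules for defined symbols — to $\Theta^{P',\Omega}$. If the step rewrites a defined symbol inside some sequent of $P$, then, since extraction of $\Theta$ copies formulas verbatim from the axioms down to the root and never inspects term structure, the same rewrite applies inside $\Theta^{P,\Omega}$ and produces $\Theta^{P',\Omega}$. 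If the step unrolls a proof-link leaf $(\varphi_j,k',\bar{b})$ of $P$ carrying the inherited configuration $\Omega'$ (the multiset of $\Omega$- and cut-ancestor occurrences passing through that leaf), then by the proof-link clause in the definition of the characteristic term this leaf contributes precisely the subterm $\cL^{\mathbf{C}_j,\Omega'}(k',\bar{b})$ to $\Theta^{P,\Omega}$; the matching clause-set-symbol rule sends this subterm to $\Theta^{\pi_j,\Omega'}$, respectively to $\Theta^{\nu_j,\Omega'}$ with $k\mapsto k'$ and $\bar{u}\mapsto\bar{b}$, which by the unary and binary clauses of the definition, together with the fact that $\Omega'$ records exactly which end-sequent occurrences of the plugged-in subproof are ancestors, is syntactically the subterm contributed by that subproof in $\Theta^{P',\Omega}$. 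All other summands and factors of the clause term are untouched, so $\Theta^{P,\Omega}$ rewrites to $\Theta^{P',\Omega}$. Iterating along the whole sequence and invoking termination and confluence of both rewrite systems, the normal forms coincide; the base case $\gamma=0$ is the instance where the link rewrites to $\pi_j$, and any $\emptyset$-proof links inside $\pi_j$ are handled by the nested induction on the component ordering.

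The step I expect to be the main obstacle is the configuration bookkeeping hidden in the simulation lemma. One must show that the configuration $\Omega'$ assigned to a proof link in $P$ — obtained by following $\Omega$- and cut-ancestors up to that leaf — is exactly the configuration for which the clause-set-symbol rewrite rule is defined, and, more delicately, that the cut-ancestor relation of the enlarged proof $P'$, restricted to the subproof replacing the link, coincides with the relation one gets by regarding $\Omega'$ as that subproof's configuration. Equivalently, one has to verify that ``being an ancestor of a cut'' is a local property preserved by unrolling; this is precisely what configurations are designed to capture, but establishing it still requires an induction over the inference rules — contractions and weakenings acting on cut-ancestors, the $\mathcal{E}$-rule, and proof links nested inside the $\nu_j$ — together with a check that the configurations occurring remain within the computed (finite) set of relevant configurations. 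Once this locality is in place, both the base and the step of the outer induction go through uniformly, and the proposition is the stated specialisation with $i=1$, $\Omega=\emptyset$, $\bar{a}=\bar{t}$.
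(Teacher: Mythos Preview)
The paper does not actually prove this proposition: it is stated with citations to \cite{CERESS,schema2017Journal} and no proof is given in the present paper. So there is no ``paper's own proof'' to compare against here; the result is imported from the cited sources.

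That said, your proposal is the natural reconstruction of how such a proof goes, and it matches the approach taken in the cited work. The key move --- generalising from the empty configuration on the first component to arbitrary components $\mathbf{C}_i$, arbitrary relevant configurations $\Omega$, and arbitrary term vectors $\bar{a}$ --- is exactly what is needed for the induction to close, since unrolling a proof link inside $\nu_i(k)$ exposes a subproof whose end-sequent carries a non-empty configuration inherited from the surrounding cut structure. Your one-step simulation lemma, splitting on whether the evaluation step is a defined-symbol rewrite or a proof-link unrolling, is the standard way to organise this, and your identification of the configuration bookkeeping as the delicate point is accurate: in the cited sources this is handled precisely by the design of the clause-set-symbol rules (the $\Omega'$ attached to a link is \emph{defined} to be the set of $\Omega$- and cut-ancestor occurrences at that leaf, so the locality you need is built into Definition~\ref{def:CTS}). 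There is no gap in your plan.
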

\begin{example} The following characteristic term schema is taken from the proof analysis of~\cite{DBLP:conf/cade/CernaL16} where $\Theta^{\Phi} = cl^{\psi,\emptyset}(n+1)$ and $\Omega(n)\equiv \exists x \forall y \left( \left( \left( x\leq y  \right) \rightarrow n+1= f(y) \right)    \vee f(y) < n+1 \right):$
\begin{align*}
{\scriptstyle cl^{\psi,\emptyset}(0)\equiv}& \begin{array}{l}  {\scriptstyle cl^{\varphi,\Omega(0)}(0) \oplus   \left(\left\lbrace\vdash f(\alpha)<0 \right\rbrace  \otimes \left\lbrace \vdash 0=f(\alpha)\right\rbrace  \otimes \left\lbrace  0\leq \beta \vdash \right\rbrace \right)}  \end{array}\\
{\scriptstyle cl^{\psi,\emptyset}(n+1)\equiv }& \begin{array}{l} {\scriptstyle cl^{\varphi,\Omega(n+1)}(n+1)\oplus   \left(\left\lbrace \vdash f(\alpha)<n+1 \right\rbrace   \otimes \left\lbrace  \vdash n+1=f(\alpha) \right\rbrace \otimes   \left\lbrace 0\leq \beta \vdash \right\rbrace  \right) } \end{array}\\
{\scriptstyle cl^{\varphi,\Omega(0)}(0) \equiv }& \begin{array}{l}{\scriptstyle \left\lbrace  f(\alpha)<0\vdash \right\rbrace \oplus\left\lbrace  f(g(\alpha))<0\vdash \right\rbrace  \oplus \left\lbrace  \vdash \alpha\leq\alpha \right\rbrace   \oplus \left\lbrace \vdash \alpha\leq g(\alpha)\right\rbrace    \oplus \left\lbrace  0=f(\alpha), 0=f(g(\alpha)) \vdash \right\rbrace } \end{array} \\
 {\scriptstyle cl^{\varphi,\Omega(n+1)}(n+1) \equiv }& \begin{array}{l}   {\scriptstyle cl^{\varphi,\Omega(n)}(n) \oplus  \left\lbrace n+1=f(\alpha),n+1=f(g(\alpha)) \vdash \right\rbrace  \oplus \left\lbrace \vdash \alpha\leq\alpha\right\rbrace  \oplus \left\lbrace  \alpha\leq g(\alpha)\right\rbrace  \oplus    \left\lbrace  n+1=f(\beta)\vdash n+1=f(\beta)\right\rbrace  \oplus } \\ {\scriptstyle  \left\lbrace \alpha\leq\beta \vdash \alpha\leq\beta \right\rbrace \oplus  \left\lbrace f(\beta)<n+1 \vdash f(\beta)<n+1\right\rbrace  \oplus  \left\lbrace f(\alpha)<n+1,\alpha\leq\beta \vdash n=f(\beta),f(\beta)<n  \right\rbrace   } \end{array}
\end{align*}
The following clause set is $\Theta^{\Phi}\downarrow_{1}$. Equivalence can be checked by referring to the formal proof of~\cite{DBLP:conf/cade/CernaL16}.
$$\begin{array}{lll}
\vdash \alpha  \leq  \alpha & \vdash \alpha \leq g(\alpha) & 0 = f(\alpha) , 0 = f(g(\alpha)) \vdash \\ 
1 = f(\alpha) , 1 = f(g(\alpha)) \vdash & \beta \leq \alpha , f(\beta)< 1 \vdash  f(\alpha)< 0 , 0 = f(\alpha) &f(\alpha)< 0\vdash    \\
f(g(\alpha))< 0\vdash  & 0\leq \alpha \vdash f(\alpha)< 1 , f(\alpha) = 1
\end{array}$$
\end{example}
\subsection{Top Clause Set}
Let us consider the characteristic clause set of an \ACNFtop\ $\varphi$, that is $\mathit{CL}(\varphi)$. The smallest unsatisfiable subset of $\mathit{CL}(\varphi)$ has a special structure pointed out be S. Wolfsteiner in \cite{WolfThesis} (Definition 6.1.6)  which he called a $\mathbf{TANCF}$. Such a clause set term is generated from the product of atomic clause sets $\left\lbrace A \vdash , \vdash A  \right\rbrace $ where $A$ is an atom. Later this structure was used for constructing clause sets from a set of propositional symbols by A. Condoluci~\cite{ConduThesis}, the so called {\em top clause set}. 

\begin{definition}[\cite{ConduThesis}]
Let $A$ be an  atom whose predicate symbol is $P\in \mathbf{P_{u}}$, and $\mathcal{A},\mathcal{A}'$ be
multisets  of atoms. We define the operator $\mathit{CL}^{t}(\cdot)$ which
maps a set of atoms to a clause set: 

\begin{minipage}{.4\textwidth}
$$\mathit{CL}^{t}(\lbrace A \rbrace) = \left\lbrace A\vdash , \vdash A  \right\rbrace $$
\end{minipage}
\begin{minipage}{.6\textwidth}
$$\mathit{CL}^{t}(\mathcal{A} \cup \mathcal{A}' ) = \mathit{CL}^{t}(\mathcal{A}) \times \mathit{CL}^{t}(\mathcal{A}')$$
\end{minipage}\\\\
 We refer to $\mathit{CL}^{t}(\mathcal{A})$ as the top clause set with respect to the set of atoms $\mathcal{A}$. 
\end{definition}

As was shown in both~\cite{ConduThesis,WolfThesis}, such a clause set is always unsatisfiable, even in the first-order case. We use this concept later to further extend the results presented in Section~\ref{sec:clausAnalProofSch}.

\subsection{Clausal Subsumption: Theory and Results}

Let $A(\bar{x})$ be a formula with the free variables $\bar{x}=(x_1,\ldots 
,x_n)$ for $n\in\mathbb{N}$. A substitution $\theta =[\bar{x}\backslash\bar{t}]$ is an 
instantiation of the free variables $x_1,\ldots ,x_n$ within a formula $A$ with the 
terms $\bar{t}=(t_1,\ldots ,t_n)$ such that $t_i$ do not contain $x_i$ for $1\le i
\le n$ as a subterm, i.e.\ $A(\bar{x})\theta =A(\bar{t})\theta$. 

We call a sequent containing only atomic formulas a {\em clause}. Moreover, we 
define the subset relation $\subseteq$ on clauses, i.e.\ a clause $C$ is a subset 
of a clause $D$ ($C\subseteq D$) if the atoms in the succedent of $C$ are a subset 
of the atoms in the succedent of $D$ and analogously the atoms in the antecedent of 
$C$ are a subset of the atoms in the antecedent of $D$. Now we are able to define 
the subsumption relation of clauses (see \cite{Baaz:2013:MC:2509679}). 

\begin{definition}[Subsumption]
Let $C$ and $D$ be clauses. Then $C$ subsumes $D$ ($C\SU D$) if there exists a 
substitution $\theta$ s.t. $C\theta\subseteq D$. 
Since we define $\subseteq$ with sets we do not have to consider permutation. 
We say that a set of clauses $\mathcal{C}$ subsumes a set of clauses $\mathcal{D}$ 
($\mathcal{C}\SU\mathcal{D}$) if for all $D\in\mathcal{D}$ there exists a $C\in 
\mathcal{C}$ s.t.\ $C\SU D$. 
\end{definition}

The concept of clausal subsumption was used in~\cite{BAAZ2006a} derive the theorems that the following two corollaries are based on. 

\begin{corollary}[Direct corollary of Theorem 6.1~\cite{BAAZ2006a}]
\label{cor:subTANCF}
Let $\varphi$ be an \LK-proof and $\psi$ be the \ACNFtop of $\varphi$ as described 
in Section~\ref{sec:TANCF}. Then $\Theta (\varphi )\SU\Theta (\psi )$.
\end{corollary}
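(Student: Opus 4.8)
The statement "Corollary 1 ($\Theta(\varphi)\SU\Theta(\psi)$ where $\psi$ is the \ACNFtop\ of $\varphi$)" is billed as a direct corollary of Theorem 6.1 of \cite{BAAZ2006a}, so the strategy is not to reprove the clausal analysis from scratch but to package the known result about single reduction steps into a statement about the full normalization to \ACNFtop. First I would recall the core lemma underpinning Theorem 6.1: if $\psi \rightarrow_{\mathcal{R}} \chi$ is a single reductive cut-elimination step, then $\Theta(\psi) \SU \Theta(\chi)$ — i.e.\ each rewrite rule of $\mathcal{R}$ either leaves the characteristic clause term unchanged, or transforms $\oplus$/$\otimes$ combinations of the subterms in a way that only adds literals or instantiates variables in the resulting clauses. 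This is proved by a case analysis over the rewrite rules of $\mathcal{R}$, distinguishing rank-reduction rules (which permute inferences and only reshuffle the clause-set term, preserving it up to the clause-term identities $|\Theta_1\oplus\Theta_2| = |\Theta_1|\cup|\Theta_2|$ and $|\Theta_1\otimes\Theta_2| = \{C\circ D\}$) from grade-reduction rules (which split a cut on a compound formula into cuts on its immediate subformulas, at worst duplicating subderivations and hence replacing one clause by several clauses each subsuming it, or instantiating eigenvariables by terms).

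**Key steps.** (1) State and cite the single-step subsumption lemma from \cite{BAAZ2006a}: $\psi \rightarrow_{\mathcal{R}} \chi$ implies $\Theta(\psi)\SU\Theta(\chi)$. (2) Observe that $\SU$ is a preorder on clause sets — reflexivity is witnessed by the identity substitution, and transitivity follows by composing the witnessing substitutions ($C\theta_1 \subseteq D$ and $D\theta_2 \subseteq E$ give $C\theta_1\theta_2 \subseteq D\theta_2 \subseteq E$, using that $\subseteq$ on clauses is monotone under substitution). (3) By definition (Section~\ref{sec:TANCF}), the \ACNFtop\ $\psi$ of $\varphi$ is obtained by applying generalized reductive cut-elimination $\rightarrow_{\mathcal{R}}^{*}$ to $\varphi$ while forbidding the two displayed atomic-cut-removal rules; since $\varphi$ is an \LK-proof this terminates, so $\varphi \rightarrow_{\mathcal{R}}^{*} \psi$ via finitely many single steps. (4) Induct on the length of this reduction sequence: the base case is reflexivity, and the step case chains the single-step lemma with transitivity of $\SU$. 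Conclude $\Theta(\varphi)\SU\Theta(\psi)$.

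**Main obstacle.** The genuine mathematical content lives entirely in step (1), and since we are permitted to assume results stated earlier — and Theorem 6.1 of \cite{BAAZ2006a} is cited as the source — the corollary reduces to the bookkeeping in steps (2)–(4). The only subtlety worth flagging is that we must be careful about \emph{which} reduction relation is used: the \ACNFtop\ is produced by the \emph{restricted} system (no atomic cut elimination), and one should check that the single-step subsumption lemma holds for exactly the rules retained in that restricted system. This is immediate because removing rules from $\mathcal{R}$ can only shrink the set of single steps for which the lemma must be verified, so the lemma transfers verbatim; moreover the forbidden rules are precisely the ones that would destroy atomic cuts, and it is their retention, not their removal, that would threaten the clause-set structure, so the restriction is harmless here. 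Hence the proof is a short induction, with essentially all the work delegated upstream to \cite{BAAZ2006a}.
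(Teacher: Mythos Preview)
Your proposal is correct and matches the paper's approach: the paper gives no proof at all for this corollary, simply labeling it a direct consequence of Theorem~6.1 in \cite{BAAZ2006a}. Your unpacking of that citation---single-step subsumption under $\rightarrow_{\mathcal{R}}$, transitivity of $\SU$, and induction on the length of the reduction sequence to the \ACNFtop---is exactly the intended reading and requires no further justification.
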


\begin{corollary}[Direct corollary of Theorem 6.2~\cite{BAAZ2006a}]
Let $\varphi$ be an \LK-proof and $\psi$ be an \ACNFtop of $\varphi$ as described 
in Section~\ref{sec:TANCF}. Then there exists a resolution refutation $\gamma$ of $CL(\varphi )$ s.t.\ $\gamma\SU \textit{RES}(\psi )$ where $\textit{RES}(\psi )$ is the resolution proof corresponding to $\psi$.
\end{corollary}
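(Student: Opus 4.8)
The plan is to reduce the statement to Corollary~\ref{cor:subTANCF} together with a standard subsumption-lifting property of resolution, specialized to ordinary \LK-proofs. First I would make precise what $\textit{RES}(\psi)$ is: since $\psi$ is an \ACNFtop\ of $\varphi$, all its cuts are atomic and pushed to the top of the proof, so the sub-derivation of $\psi$ built from these atomic cuts, read bottom-up with the cut formulas as resolved atoms, is literally a resolution refutation whose initial clauses are the leaf sequents of $\psi$ lying above the atomic cuts, i.e.\ exactly the clauses of $CL(\psi)=|\Theta(\psi)|$; this refutation is $\textit{RES}(\psi)$ (a full account is in~\cite{BAAZ2006a,WolfThesis}).

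Second, by Corollary~\ref{cor:subTANCF} we have $\Theta(\varphi)\SU\Theta(\psi)$, i.e.\ for every $D\in CL(\psi)$ there is a $C\in CL(\varphi)$ and a substitution $\theta_D$ with $C\theta_D\subseteq D$. I would then construct $\gamma$ by induction over the structure of $\textit{RES}(\psi)$, maintaining the invariant that each node of $\textit{RES}(\psi)$ carrying a clause $D$ is matched by a node of $\gamma$ carrying a clause $C'$ with $C'\SU D$. At the leaves this holds directly by the corollary. For a resolution inference in $\textit{RES}(\psi)$ on an atom $A$ with premises $D_1,D_2$ and resolvent $D$, the induction hypothesis gives $C_1'\SU D_1$ and $C_2'\SU D_2$; if the relevant (instantiated) occurrences of $A$ survive in both $C_1'$ and $C_2'$ we apply resolution, together with the factoring/contraction forced by working modulo the set-based $\subseteq$ (so permutations need not be tracked), and verify that the resolvent subsumes $D$; if one instance no longer contains $A$, that premise already subsumes $D$ and is carried forward unchanged (so $\gamma$ may be strictly shorter than $\textit{RES}(\psi)$ while still respecting $\SU$ on proofs). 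Contraction inferences of $\textit{RES}(\psi)$ are treated the same way. At the root, which carries the empty clause $\vdash$, the matched clause is subsumed by $\vdash$ and hence is $\vdash$ itself, so $\gamma$ refutes $CL(\varphi)$ and $\gamma\SU\textit{RES}(\psi)$.

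The step I expect to be the main obstacle is the bookkeeping in this lifting: the leaf substitutions $\theta_D$ must be merged consistently along the refutation so that the most general resolvents actually subsume the corresponding clauses of $\textit{RES}(\psi)$, and the degenerate ``resolved literal subsumed away'' case has to be handled so that skipping an inference never breaks the $\SU$ relation between the two proofs. This merging-and-skipping argument is precisely the content of Theorem 6.2 of~\cite{BAAZ2006a}; since here $\varphi$ and $\psi$ are plain \LK-proofs, no schematic machinery enters, and the corollary follows by instantiating that theorem.
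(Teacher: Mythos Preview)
Your proposal is correct, and in fact it is considerably more detailed than what the paper provides: the paper gives no proof at all for this corollary, treating it as an immediate consequence of Theorem~6.2 of~\cite{BAAZ2006a} and moving on. You unpack what that theorem actually does---namely, the subsumption-lifting of a resolution refutation along $\Theta(\varphi)\SU\Theta(\psi)$, with the two cases (resolved literal survives / is already subsumed away) handled in the inductive step---and you correctly identify the substitution-merging bookkeeping as the technical core. Since you also explicitly note that this lifting is precisely Theorem~6.2 of~\cite{BAAZ2006a} specialized to ordinary \LK-proofs, your argument and the paper's one-line citation land on exactly the same external result; you simply spell out the mechanism, which the paper does not.
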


\section{Lazy Instantiation of Proof Schema }
\label{sec:lazyIN}
In this section we define a transformation which allows for a kind of lazy 
evaluation~\cite{Henderson:1976:LE:800168.811543} of proof schemata, that is 
{\em lazy instantiation}. A similar concept was used by C. Sprenger and M. Dam \cite{mucalcyclic} to transform global induction into local induction for a variant of the $\mu$-calculus. Lazy instantiation  allows us to apply reductive 
cut-elimination to proof schemata by moving the cyclic structure (proof links) 
whenever it impedes the cut-elimination procedure, i.e.\ auxiliary sequents 
introduced by proof links do not have a reductive cut-elimination rule. Though one can never eliminate the cuts through lazy 
instantiation we can get a better understanding of the relationship between 
proof schemata in there instantiated and uninstantiated forms. Essentially, we 
extend the clausal analysis of~\cite{BAAZ2006a} to proof schemata. The lazy 
instantiation relation is defined as $\rightarrow_{L}\, :\byus{2}\rightarrow \byus{2}$. We 
first provide an example transformation before providing a formal definition of the transformation. 

\begin{example}
\label{ex:trans}
 Consider the following proof schema $\Phi = \left\langle \mathbf{C}_{1} \right\rangle$ where  $
\mathbf{C}_{1} = 
(\psi,\pi,\nu(k))$ (note that by $\_$ we represent the empty tuple of terms),

\begin{flushleft}
\begin{minipage}{.1\textwidth}
$$\nu(k)=$$
\end{minipage}

\begin{minipage}{.8\textwidth}
\begin{small}
\begin{prooftree}
\AxiomC{$\begin{array}{c} 
\varphi_{(n+1)} 
\end{array}$}
\AxiomC{$\begin{array}{c} 
(\psi,n,\_) 
\end{array}$}
\dashedLine
\UnaryInfC{$\begin{array}{c} 
\bigwedge_{i=0}^{n}  P(i) \rightarrow Q(i) 
\vdash 
\bigwedge_{i=0}^{n} \neg P(i) \vee Q(i) 
\end{array}$}
\RightLabel{$\wedge:r$}
\BinaryInfC{$\begin{array}{c} 
\bigwedge_{i=0}^{n+1}  P(i) \rightarrow Q(i) , P(n+1) \rightarrow Q(n+1)  
\vdash 
\left( \bigwedge_{i=0}^{n}  \neg P(i) \vee Q(i) \right) \wedge\left( \neg P(n+1) \vee Q(n+1) \right) 
\end{array}$}
\RightLabel{$\mathcal{E}$}
\UnaryInfC{$\begin{array}{c} 
\bigwedge_{i=0}^{n+1}  P(i) \rightarrow Q(i) , P(n+1) \rightarrow Q(n+1) 
\vdash 
\bigwedge_{i=0}^{n+1} \neg P(i) \vee Q(i) 
\end{array}$}
\RightLabel{$\land\colon l$}
\UnaryInfC{$\begin{array}{c} 
\left(  \bigwedge_{i=0}^{n}  P(i) \rightarrow Q(i)\right)\wedge  P(n+1) \rightarrow Q(n+1) 
\vdash 
\bigwedge_{i=0}^{n+1} \neg P(i) \vee Q(i)  
\end{array}$}
\RightLabel{$\mathcal{E}$}
\UnaryInfC{$\begin{array}{c} 
\bigwedge_{i=0}^{n+1}  P(i) \rightarrow Q(i) 
\vdash 
\bigwedge_{i=0}^{n+1} \neg P(i) \vee Q(i) 
\end{array}$}
\end{prooftree}
\end{small}
\end{minipage}
\end{flushleft}

where $\varphi_{(n+1)}$ is an \LK-proof of $P(n+1) \rightarrow Q(n+1) \vdash \neg P(n+1) \vee Q(n+1). $

\begin{flushleft}
\begin{minipage}{.1\textwidth}
$$\pi=$$
\end{minipage}
\begin{minipage}{.8\textwidth}
\begin{small}
\begin{prooftree}
\AxiomC{$\begin{array}{c}   P(0) \vdash  P(0) \end{array}$}
\RightLabel{$\neg:r$}
\UnaryInfC{$\begin{array}{c}  \vdash  \neg P(0), P(0) \end{array}$}
\AxiomC{$\begin{array}{c} Q(0) \vdash Q(0) \end{array}$}
\RightLabel{$\rightarrow:l$}
\BinaryInfC{$\begin{array}{c} P(0) \rightarrow Q(0) \vdash \neg P(0), Q(0)\end{array}$}
\RightLabel{$\vee:r$}
\UnaryInfC{$\begin{array}{c} P(0) \rightarrow Q(0) \vdash \neg P(0) \vee Q(0)\end{array}$}
\end{prooftree}
\end{small}
\end{minipage}
\end{flushleft}

We can lazily evaluate the accumulating proof schema $\Phi^a$ to $\Psi$, $\Phi^a\rightarrow_{L}\Psi$, where $\Psi = \left\langle \mathbf{C}_{1}',\mathbf{C}_{2}\right\rangle$,  $\mathbf{C}_{1}' = 
(\psi,\pi,\nu(k)')$, and  $\mathbf{C}_{2} = (\chi,\pi,\nu (k)'')$. Let $\nu(k).\mu = (\psi,n,\_)$. We define  $\nu(k)' = \nu(k)\sigma\left[\nu (k)'' \right]_{\mu}$, where $\sigma = \left\lbrace n\leftarrow n+1\right\rbrace$. The \LKS-proof $\nu (k)''$ is the same as $\nu(k)$ except  $\nu (k)''.\mu = (\chi,n,\_)$. The important parts of $\nu(k)'$ can be sketched as follows:

\begin{center}
\begin{small}
\begin{prooftree}
\AxiomC{$\begin{array}{c}\varphi_{(n+2)}\end{array}$}
\AxiomC{$\begin{array}{c}\varphi_{(n+1)}\end{array}$}
\AxiomC{$\begin{array}{c} (\chi,n,\_)\end{array}$}
\dashedLine
\UnaryInfC{$\begin{array}{c} 
\bigwedge_{i=0}^{n}  P(i) \rightarrow Q(i) 
\vdash 
\bigwedge_{i=0}^{n} \neg P(i) \vee Q(i) 
\end{array}$}
\RightLabel{$\wedge:r$}
\BinaryInfC{$\begin{array}{c} 
\vdots 
\end{array}$}
\RightLabel{$c\colon l$}
\UnaryInfC{$\begin{array}{c} 
\bigwedge_{i=0}^{n+1} P(i) \rightarrow Q(i) 
\vdash 
\bigwedge_{i=0}^{n+1} \neg P(i) \vee Q(i) 
\end{array}$}
\RightLabel{$\land\colon r$}
\BinaryInfC{$\begin{array}{c} 
\bigwedge_{i=0}^{n+1}  P(i) \rightarrow Q(i) ,  P(n+2) \rightarrow Q(n+2) 
\vdash 
\left( \bigwedge_{i=0}^{n+1}  \neg P(i) \vee Q(i) \right) \wedge\left( \neg P(n+2) \vee Q(n+2) \right) 
\end{array}$}
\RightLabel{$\mathcal{E}$}
\UnaryInfC{$\begin{array}{c} 
\vdots 
\end{array}$}
\RightLabel{$c\colon l$}
\UnaryInfC{$\begin{array}{c} 
\bigwedge_{i=0}^{n+2}  P(i) \rightarrow Q(i) 
\vdash 
\bigwedge_{i=0}^{n+2} \neg P(i) \vee Q(i) 
\end{array}$}
\end{prooftree}
\end{small}
\end{center}

Notice $\Phi^a\downarrow_{(s(s(\alpha)))} \equiv \Psi\downarrow_{s(\alpha)}$.
\end{example} 

For the formal definition we use $\nu (k)'$ and $\nu (k)''$ as in example \ref{ex:trans} even though $\nu (k)$ is not directly discussed. 

\begin{definition}
\label{def:LazyEval}
Let $\Phi,\Psi\in \byus{2}$, such that $\mathit{PL}(\Phi.1)$ does not have incoming links. We say that 
$\Phi$ {\em lazily instantiates} to $\Psi$, $\Phi \rightarrow_{L} \Psi$, if 
\begin{center}
\begin{itemize}
 \item $\Psi.i = \Phi.i$ for $1< i \leq |\Phi|$,
 \item $\Psi.1 = (\Phi.1.1, \Phi.1.2, \nu(k)')$ 
\end{itemize}
\end{center}
where $\nu (k)'$ is constructed from $\Phi.1.3$ as follows: First construct  $\nu (k)'' = \Phi.1.3\sigma$ where $\sigma=\lbrace n\leftarrow n+1\rbrace$ is a parameter substitution. Then we construct $\nu (k)'$ from $\nu (k)''$ by replacing every proof link $(\chi,k',\bar{t}) \in \textit{PL}(\nu (k)'')$ at position $\gamma$ with the appropriate \LKS-proof, that is if $\Phi.i = (\chi,\pi_i,\nu_i(k))$ then we substitute $\nu (k)''\left[ \nu_i(k)(k') \right]_{\gamma}$ and $\nu (k)''\left[ \pi_i \right]_{\gamma}$ if $k'=0$.
\end{definition}
\begin{remark}
Note that if $\Phi$ is accumulating  then $\mathit{PL}(\Phi .1)$ does not have incoming link. 
\end{remark}
In general it will be necessary to lazily instantiate a proof schema several times. In fact, we will consider all proof schemata that are producible by lazy instantiations.

\begin{definition}
Let $\Phi,\Psi\in \byus{2}$. We define the set of {\em lazily reachable proof schemata} from a proof schema $\Phi$, $\mathit{lr}(\Phi)$ by the following inductive construction:
\begin{itemize}
\item $\Phi\in \mathit{lr}(\Phi)$
\item if $\Psi \in \mathit{lr}(\Phi)$ and $\Psi\rightarrow_{L}\Gamma$, then $\Gamma\in \mathit{lr}(\Phi)$
\end{itemize}
\end{definition}

\begin{definition}
Let $\Phi,\Psi\in \byus{2}$. We define the set of {\em reductively reachable proof schemata} from a proof schemata $\Phi$, $\mathit{rr}(\Phi)$ by the following inductive construction ($\rightarrow_{R}$ is define at the beginning of Section~\ref{sec:prelim}) :
\begin{itemize}
\item $\Phi\in \mathit{rr}(\Phi)$
\item if $\Psi \in \mathit{rr}(\Phi)$ and $\Psi\rightarrow_{\mathcal{R}} \Gamma$, then $\Gamma\in \mathit{rr}(\Phi)$
\end{itemize}
\end{definition}
\begin{definition}
Let $\Phi,\Psi\in \byus{2}$. We define the {\em lr-chain length} $|\Psi|^{\Phi}_{lr}$(respectively the rr-chain length $|\Psi|^{\Phi}_{rr}$) inductively as follows:
\begin{itemize}
\item if $\Psi = \Phi$, then $|\Psi|^{\Phi}_{rr} = |\Psi|^{\Phi}_{lr} = 0 $, 
\item if $\Gamma \rightarrow_{L} \Psi$ or $\Gamma \rightarrow_{R} \Psi$ , then $|\Psi|^{\Phi}_{lr} = |\Gamma|^{\Phi}_{lr}+1$ $(|\Psi|^{\Phi}_{rr} = |\Gamma|^{\Phi}_{rr}+1)$ 
\item $|\Psi|^{\Phi}_{lr} = |\Psi|^{\Phi}_{rr} = \infty$ otherwise. 
\end{itemize}
\end{definition}
An important property of this transformation is that, when applied to accumulating proof schemata $\Phi$, for every proof schema in $\Psi\in \mathit{lr}(\Phi)$, there are only finitely many evaluations of $\Phi$ that are not equivalent to an evaluation of $\Psi$. In Example~\ref{ex:trans} there is no evaluation of $\Psi$ equivalent to $\Phi\downarrow_{1}$.

\begin{lemma}
\label{lem:pairing}
Let $\Phi\in \byus{2}$ be accumulating and $ \Psi \in  \mathit{lr}(\Phi)$. Then for all $ \alpha \in 
\mathbb{N}$  there exists $ \beta\in \mathbb{N}$ such that  $\Psi\downarrow_{\alpha} 
\equiv_{syn} \Phi\downarrow_{\beta}$.
\end{lemma}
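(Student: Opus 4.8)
The plan is to prove the statement by induction on the lr-chain length $|\Psi|^{\Phi}_{lr}$. For the base case, $\Psi = \Phi$ and we may simply take $\beta = \alpha$, since $\Phi \downarrow_\alpha \equiv_{syn} \Phi \downarrow_\alpha$ trivially. For the induction step, suppose $\Psi \in \mathit{lr}(\Phi)$ with $|\Psi|^{\Phi}_{lr} = \ell + 1$, so there is some $\Gamma \in \mathit{lr}(\Phi)$ with $|\Gamma|^{\Phi}_{lr} = \ell$ and $\Gamma \rightarrow_L \Psi$. By the induction hypothesis, for every $\alpha' \in \mathbb{N}$ there is $\beta' \in \mathbb{N}$ with $\Gamma \downarrow_{\alpha'} \equiv_{syn} \Phi \downarrow_{\beta'}$. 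It therefore suffices to show: for the single step $\Gamma \rightarrow_L \Psi$, for every $\alpha \in \mathbb{N}$ there is $\alpha'$ with $\Psi \downarrow_\alpha \equiv_{syn} \Gamma \downarrow_{\alpha'}$; composing with the induction hypothesis then yields the claim. In fact we expect $\alpha' = \alpha + 1$ to work, mirroring the remark after Example~\ref{ex:trans} that $\Phi^a \downarrow_{s(s(\alpha))} \equiv \Psi \downarrow_{s(\alpha)}$.

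The heart of the argument is thus the single-step claim: if $\Gamma \rightarrow_L \Psi$ then $\Psi \downarrow_\alpha \equiv_{syn} \Gamma \downarrow_{\alpha+1}$ for all $\alpha \in \mathbb{N}$. Here I would unfold Definition~\ref{def:LazyEval}: $\Psi.i = \Gamma.i$ for $i > 1$, and $\Psi.1 = (\Gamma.1.1, \Gamma.1.2, \nu(k)')$ where $\nu(k)'$ is obtained from $\Gamma.1.3$ by first applying the parameter substitution $\sigma = \{n \leftarrow n+1\}$ to get $\nu(k)''$, and then replacing each outgoing proof link in $\nu(k)''$ by the corresponding body $\nu_i(k)(k')$ (or $\pi_i$ when $k' = 0$) of the component it points to. Since $\mathit{PL}(\Gamma.1)$ has no incoming links (guaranteed because $\Gamma$ is accumulating — or, more carefully, one checks that this property is preserved along $\rightarrow_L$, which it is since the new first component's only proof links are the freshly substituted-in bodies, none of which point back to $\Gamma.1.1$), the rewrite system of Definition~\ref{def:eval} that computes $\downarrow$ treats $\Psi \downarrow_\alpha$ by: unrolling $\nu(k)'$ at the parameter value $\alpha$, which, because of the $\sigma$ shift and the one-level inlining already performed, produces exactly the same \LKE-derivation as unrolling $\Gamma.1.3$ at parameter value $\alpha + 1$ — the inlined link bodies at parameter $\alpha$ correspond precisely to the first unrolling step of $\Gamma.1.3$ at $\alpha+1$, and thereafter the recursion on the tail components $\Gamma.i = \Psi.i$ proceeds identically. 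This is essentially a confluence/commutation argument between the link-rewrite relation $\Rightarrow$ and the definitional inlining done by $\rightarrow_L$, together with the observation that the defined-symbol rewrite rules commute with everything.

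The main obstacle I anticipate is handling the parameter substitution and the term vectors $\bar t$ carefully when the proof links carry nontrivial index terms $k'$ that are proper subterms of $k$, and schematic variable vectors $\bar a, \bar b$. One must verify that $(\nu(k)')\downarrow_\alpha$ and $(\Gamma.1.3)\downarrow_{\alpha+1}$ agree not only structurally but on every term occurring in every sequent, which requires commuting $\sigma$ with the substitutions applied at proof-link instantiation and with the normalization $\downarrow$ of defined function and predicate symbols (using that $\mathcal{E}$ is convergent). A clean way to organize this is to prove a small sublemma stating that for any \LKS-proof $\mu$ only containing $\{n\}$-proof links into components of a proof schema, $\mu\sigma$ unrolled at $\alpha$ equals $\mu$ unrolled at $\alpha+1$ (both as \LKE-proofs, up to $\equiv_{syn}$), and that performing one level of link-inlining before unrolling does not change the normal form — the latter being a direct consequence of the local confluence of the combined rewrite system in Definition~\ref{def:eval}. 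Once this sublemma is in place the induction step and hence the Lemma follow immediately.
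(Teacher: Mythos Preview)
Your proposal is correct and takes exactly the same approach as the paper: the paper's entire proof is the single line ``By induction on $|\Psi|^{\Phi}_{lr}$,'' and your detailed unfolding of the base and inductive steps (including the single-step claim $\Psi\downarrow_\alpha \equiv_{syn} \Gamma\downarrow_{\alpha+1}$ and the confluence argument) is a faithful fleshing-out of that sketch. One small caveat: at $\alpha=0$ the base proof $\Psi.1.2=\Gamma.1.2$ is used, so there you should take $\alpha'=0$ rather than $\alpha+1$, but this does not affect the existence claim.
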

\begin{proof}
By induction on $|\Psi|^{\Phi}_{lr}$.
\end{proof}

Lemma~\ref{lem:pairing} tells us that we can use lazy instantiation to change the structure of a given proof schema $\Phi$ into a proof schema $\Psi$ without completely losing the logical meaning, that is equivalence is only lost for finitely many \LK-proofs. More specifically, every evaluation of $\Psi$ is equivalent, syntactically, to an evaluation of $\Phi$.

\begin{definition}
Let $\Phi,\Psi\in \byus{2}$ such that $\Psi\in \mathit{lr}(\Phi)$ and $\Phi$ is accumulating. We define a {\em numeral pair} $(\alpha,\beta)$ as a pair of natural numbers such that $\Psi\downarrow_{\alpha}  \equiv_{syn} \Phi\downarrow_{\beta}$. The set of all numeral pairs for $\Psi$ and $\Phi$ is $\mathit{np}(\Psi,\Phi)$.
\end{definition}

The next lemma is a direct result of Lemma~\ref{lem:pairing}. 

\begin{lemma}
\label{lem:synlrrrproof}
Let $\Phi\in \byus{2}$ be an accumulating proof schema and $\Psi ,\Psi '\in\byus{2}$ be proof schemata. Furthermore, let $\Psi\in\mathit{lr}(\Phi )$ be lazily reachable from $\Phi$, $\Psi '\in\mathit{rr}(\Psi )$ be reductively reachable from $\Psi$, and $(\alpha,\beta)\in\mathit{np}(\Psi ,\Phi )$ be a numeral pair of $\Psi$ and $\Phi$. Then there exists an \LK-proof $\chi$ such that $\chi\in \mathit{rr}(\Phi\downarrow_{\beta})$ and $\Psi'\downarrow_{\alpha} \equiv_{syn} \chi$. 
\end{lemma}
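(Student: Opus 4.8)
The plan is to reduce the statement to a single commutation fact between evaluation and reductive cut-elimination, and then to transport the whole $\mathcal{R}$-chain $\Psi\rightarrow_{\mathcal{R}}^{*}\Psi'$ along the numeral pair $(\alpha,\beta)$. The two ingredients I would establish are: (i) a lemma saying that one schematic reductive cut-elimination step is simulated by finitely many ordinary ones on every evaluation, i.e. $\Xi\rightarrow_{\mathcal{R}}\Xi'$ implies $\Xi\downarrow_{\delta}\rightarrow_{\mathcal{R}}^{*}\Xi'\downarrow_{\delta}$ for all $\delta\in\mathbb{N}$; and (ii) the observation that, by the defining property of numeral pairs, $\Psi\downarrow_{\alpha}\equiv_{syn}\Phi\downarrow_{\beta}$, so a reduction sequence issued from $\Psi\downarrow_{\alpha}$ is literally a reduction sequence issued from $\Phi\downarrow_{\beta}$.

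For (i): a step $\Xi\rightarrow_{\mathcal{R}}\Xi'$ rewrites exactly one proof schema component $\mathbf{C}_{i}$ of $\Xi$, replacing $\mathbf{C}_{i}.2$ or $\mathbf{C}_{i}.3$ by the result of firing a single rule of $\mathcal{R}$ at some cut. Since a proof link is a $0$-ary leaf for which $\mathcal{R}$ has no reduction rule, the redex of that rule contains no proof link, and the rule fires in the same way after any parameter substitution and after normalization of the defined symbols. When $\Xi\downarrow_{\delta}$ is formed as in Definition~\ref{def:eval}, the rewritten proof of $\mathbf{C}_{i}$ occurs --- after parameter instantiation and full expansion of its own proof links --- as the subderivations sitting at the (finitely many, possibly nested) positions where a proof link to $\mathbf{C}_{i}.1$ has been unfolded. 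Because the redex is disjoint from every such position, I can replay the schematic rewrite at each occurrence, handling an outermost occurrence before those nested inside it; performing all of these $\rightarrow_{\mathcal{R}}$-steps yields precisely $\Xi'\downarrow_{\delta}$, since $\Xi'\downarrow_{\delta}$ is produced by the same unfolding with the rewritten proof of $\mathbf{C}_{i}$ in place of the original. If $\mathbf{C}_{i}.1$ is never reached while evaluating $\Xi\downarrow_{\delta}$, then already $\Xi\downarrow_{\delta}\equiv_{syn}\Xi'\downarrow_{\delta}$ and no step is needed.

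Granting (i), I would then proceed by induction on $|\Psi'|^{\Psi}_{rr}$, which is finite because $\Psi'\in\mathit{rr}(\Psi)$: iterating the claim along the chain $\Psi\rightarrow_{\mathcal{R}}^{*}\Psi'$ gives $\Psi\downarrow_{\alpha}\rightarrow_{\mathcal{R}}^{*}\Psi'\downarrow_{\alpha}$. Since $(\alpha,\beta)\in\mathit{np}(\Psi,\Phi)$ we have $\Psi\downarrow_{\alpha}\equiv_{syn}\Phi\downarrow_{\beta}$, hence the same sequence witnesses $\Phi\downarrow_{\beta}\rightarrow_{\mathcal{R}}^{*}\Psi'\downarrow_{\alpha}$. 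I then set $\chi:=\Psi'\downarrow_{\alpha}$: as $\Psi'\in\byus{2}$, Proposition~\ref{prop:sound} guarantees that $\chi$ is (after normalization of the defined symbols) an \LK-proof, $\chi\in\mathit{rr}(\Phi\downarrow_{\beta})$ holds by construction, and $\Psi'\downarrow_{\alpha}\equiv_{syn}\chi$ is immediate. This completes the argument.

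The main obstacle is ingredient (i), and specifically its bookkeeping. One must verify precisely that no rule of $\mathcal{R}$ ever has a proof link inside its redex --- exactly the configuration that lazy instantiation is designed to avoid --- so that ``apply a rule in the schema'' and ``unfold the proof links'' genuinely commute, and one must check that this interaction is insensitive to parameter substitution and to normalization of defined function and predicate symbols. Tracking that a single schematic step corresponds to one ordinary step per unfolded copy of the affected component (with the possibly nested copies treated in an order compatible with the nesting, and tolerating the duplication of copies when a rewrite rule copies a subproof containing a proof link) is where the care lies; the induction on the chain length, the transport across $\equiv_{syn}$, and the appeal to soundness are routine.
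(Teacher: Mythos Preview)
Your argument is correct and shares the same backbone as the paper's --- induction on $|\Psi'|^{\Psi}_{rr}$ after establishing a one-step commutation between schematic $\mathcal{R}$-reduction and evaluation, then transporting along the syntactic identity $\Psi\downarrow_{\alpha}\equiv_{syn}\Phi\downarrow_{\beta}$ furnished by the numeral pair. The difference lies in how the commutation step is argued. The paper restricts attention to $\mathcal{R}$-steps acting on $\Psi.1.3$ only and invokes the fact that $\Psi.1$ is acyclic (guaranteed because $\Phi$ is accumulating and lazy instantiation never creates incoming links in the first component); hence $\Psi.1.3$, instantiated at $\alpha$, sits exactly once as the outermost block of $\Psi\downarrow_{\alpha}$, and one schematic step becomes one ordinary step at the same position, with no multiplicity issues. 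Your ingredient (i), by contrast, treats a rewrite on an arbitrary component $\mathbf{C}_i$ that may be unfolded several times and nestedly, and tracks the ``one schematic step $\mapsto$ finitely many ordinary steps'' correspondence, including the interaction with contraction-style rules that duplicate subproofs containing proof links. This buys robustness against a more liberal reading of $\rightarrow_{\mathcal{R}}$ on schemata, at the cost of precisely the bookkeeping you flag as the main obstacle; the paper's route buys simplicity by exploiting the acyclicity of the first component that the accumulating construction guarantees, so that none of that bookkeeping is needed.
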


\begin{proof}
Consider application of reductive cut-elimination rules which only reduce the cuts of $\Psi.1.3$. This means if a rule would require passing a cut through a proof link of $\Psi.1.3$ it cannot be applied. Being that $\Psi$ and $\Phi$ are equivalent when instantiated by $(\hat{n},\hat{m})$ and by definition $\Psi.1$ is acyclic, application of rules transforming $\Psi.1.3$ can be repeated in $\Phi$ without influencing unwanted  parts of the proof. The rest of the proof is an induction on  $|\Psi'|^{\Psi}_{rr}$.
\end{proof}

\section{Clausal Analysis of Proof Schemata}
\label{sec:clausAnalProofSch}

Before we discuss the relationship between the various concepts and results, as shown in Figure~\ref{fig:one}, we introduce a concept closely related to the \ACNFtop, that is the {\em atom set schema} and its clause set~\cite{ConduThesis}. Intuitively, an atom set schema of a clause set is just a list of the atoms found in a clause set schema. Though, not exactly the same as the clause set extracted from an \ACNFtop, the clause set of an atom set schema, when instantiated, is subsumed by the clause set of the corresponding \ACNFtop. 

\subsection{Atom Set Schema}
This concept was introduced by A. Condoluci in his master thesis concerning schematic propositional logic~\cite{ConduThesis}. We extend his definition to first-order logic by considering the predicate symbols modulo tuples of terms rather than just an arithmetic term. Note that, these terms form a schema which is easily extractable from a proof schema or characteristic clause set schema. We refer to this schema as a {\em predicate term schema}. In some cases the predicate term schema may be describable by a case distinction, i.e. the terms do not contain the free parameter.

\begin{definition}[Predicate Term Schema ]
Let $P\in \mathbf{P_{u}}$ be an $\alpha$-ary  predicate symbol. Then the { \em predicate term schema of $P$} is a  mapping $t_{P}: \mathbb{N}\rightarrow \iota^{\alpha}$ from numerals to $\alpha$-tuples of terms such that for $\beta\in \mathbb{N}$, $P(t_{P}(\beta))$ is a well-formed atom. we assume that, if the $\alpha$-tuple contains a defined function symbol it will be evaluated and normalized.
\end{definition}

\begin{definition}[Extension of Definition 78~\cite{ConduThesis}]
An {\em atom set schema} $\mathcal{A}$ is a finite set of 3-tuples
\begin{small}$$\mathcal{A} = \left\lbrace \Big\langle P_{1}, t_{P_{1}}^{1},a_{1}^{1}\Big\rangle , \cdots, \Big\langle P_{1},t_{P_{1}}^{\alpha_{1}},a_{1}^{\alpha_{1}}\Big\rangle ,\cdots, \Big\langle  P_{\beta}, t_{P_{\beta}}^{1},a_{\beta}^{1}\Big\rangle ,\cdots , \left\langle P_{\beta}, t_{P_{\beta}}^{\alpha_{\beta}},a_{\beta}^{\alpha_{\beta}}\right\rangle  \right\rbrace $$\end{small}
where $P_{i}\in \mathbf{P_{u}}$ for  $1\leq i\leq \beta$, $t_{P_{i}}^{j}$ is a predicate term schema of $P_{i}$ for $1\leq j\leq \alpha_{i}$, and $a_{i}^{j}$ is an arithmetic term. 
\end{definition}  

\begin{example}
\label{ex:atomschExt}
The atom set schema  $ \left\lbrace \left\langle P, \lambda r. (\alpha + \hat{S}(r)), n+1 \right\rangle  \right\rbrace $ was extracted from Example~\ref{example.1}.
\end{example}

\begin{definition}[Extension of Definition 79~\cite{ConduThesis}]
For every $\alpha \in \mathbb{N}$, we define the evaluation of an atom set schema, where $ \left\langle P, t_{P},a\right\rangle \downarrow{\alpha} = \left\lbrace  P(t_{P}(0)),P(t_{P}(1)),\cdots,  P(t_{P}(a\left[ n\setminus \alpha \right]))\right\rbrace$, as:
 
$$\mathcal{A}\downarrow_{\alpha} = \bigcup_{\left\langle P, t_{P},a \right\rangle \in \mathcal{A}} \left\langle P, t_{P},a\right\rangle \downarrow{\alpha}.$$

\end{definition}  

\begin{example}
Using the atom set schema of Example~\ref{ex:atomschExt}, we can derive the following evaluation:
$$ \left\lbrace \left\langle P, \lambda r. (\alpha + \hat{S}(r)), n+1 \right\rangle  \right\rbrace \downarrow_{5} = \left\lbrace  P(\alpha+0),P(\alpha+s(0)),\cdots, P(\alpha+s^5(0))\right\rbrace $$
\end{example}
 An atom set schema can always be extracted from a clause set term schema. 

\begin{lemma}[Completeness for atom set schemata \cite{ConduThesis}]
\label{lem:comass}
For every clause set term schema T, there exists an atom set schema for T. 
\end{lemma}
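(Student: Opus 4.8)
The goal is to read off, from the finite rewrite system that defines $T$ (Definition~\ref{def:CTS}), a finite set $\mathcal{A}$ of $3$-tuples so that for every $\alpha\in\mathbb{N}$ the set $\mathcal{A}\!\downarrow_{\alpha}$ contains every atom occurring in the clause set $T\!\downarrow_{\alpha}$ (this ``completeness'' is what the lemma asks for; I note at the end when one also gets the sharper equality $\mathcal{A}\!\downarrow_{\alpha}=\mathrm{At}(T\!\downarrow_{\alpha})$). Two observations drive the construction. First, atom extraction behaves like a homomorphism on clause terms: since $C\circ D$ has exactly the atoms of $C$ together with those of $D$, the atoms of $|\Theta_1\oplus\Theta_2|$ and of $|\Theta_1\otimes\Theta_2|$ are both $\mathrm{At}(|\Theta_1|)\cup\mathrm{At}(|\Theta_2|)$ (the product case using that a characteristic term never evaluates to the empty clause set, which is immediate by induction on the term), so the only sources of atoms in a characteristic term are its axiom leaves and the nested clause-set symbols $\cL^{\psi,\Omega'}(k',\bar a)$. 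Second, $T=\Theta^{\Psi}$ is built from the finitely many symbols $\cL^{\mathbf{C}_i,\Omega}$ ranging over the components $\mathbf{C}_1,\dots,\mathbf{C}_m$ of the underlying proof schema $\Psi$ and the finitely many relevant configurations $\Omega$, and each right-hand side $\Theta^{\pi_i,\Omega}$, $\Theta^{\nu_i,\Omega}$ is a \emph{fixed, finite} clause term.

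For each symbol $\cL^{\mathbf{C}_i,\Omega}$ reachable from $\cL^{\mathbf{C}_1,\emptyset}$ in the rule system and each atom $P(\bar u)$ occurring in an axiom leaf of $\Theta^{\pi_i,\Omega}$ or $\Theta^{\nu_i,\Omega}$ — where $\bar u$ may mention the recursion variable $n$ and some free schematic variables — I add to $\mathcal{A}$ the triple $\langle P,\ \lambda r.\,\bar u[n\setminus r],\ a_{i,\Omega}\rangle$, leaving the defined symbols in $\bar u$ for later normalisation. Here $a_{i,\Omega}$ is an arithmetic term upper-bounding the recursion depths at which $\cL^{\mathbf{C}_i,\Omega}$ is invoked while $T\!\downarrow_{\alpha}$ is computed, read off from the graph on (component, configuration)-pairs induced by the rules: edges staying inside a component strictly decrease the depth (the self-call argument is a proper subterm of the parameter), while edges crossing to a \emph{strictly later} component carry a monotone arithmetic depth-transformation; the latter edges form a DAG along the component order, so each of the finitely many directed paths from $\mathbf{C}_1$ to $\mathbf{C}_i$ gives, by composition of its transformations, one arithmetic term, and I add one triple per path (a maximum of several bounds being realised simply as the union of the corresponding evaluations). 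Only finitely many triples arise, so $\mathcal{A}$ is a well-formed atom set schema; where a bound degenerates (e.g.\ a component reached only at a fixed depth) the case-distinction form of predicate term schemas is used.

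Correctness is then proved by induction on $\alpha$ (equivalently, by tracing the rewrite sequence computing $T\!\downarrow_{\alpha}$), using Proposition~\ref{prop:clauseequal} so that $T\!\downarrow_{\alpha}=\Theta(\Psi\!\downarrow_{\alpha})$ and the evaluation rules of Definition~\ref{def:CTS}: every atom of $T\!\downarrow_{\alpha}$ descends from an axiom leaf of some $\Theta^{\pi_i,\Omega}$ or $\Theta^{\nu_i,\Omega}$ used at a depth $r\le a_{i,\Omega}[n\setminus\alpha]$ for the path actually taken, and after normalisation of defined symbols it equals $P(\bar u[n\setminus r])$, hence lies in $\langle P,\lambda r.\bar u[n\setminus r],a_{i,\Omega}\rangle\!\downarrow_{\alpha}\subseteq\mathcal{A}\!\downarrow_{\alpha}$. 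For the converse inclusion, needed for exact equality, one reads the same trace backwards, using that the within-component decrement-by-one self-calls reach every smaller depth (so the invocation-depth set of $\cL^{\mathbf{C}_i,\Omega}$ is an initial segment) and that an axiom-leaf atom survives into $|\Theta|$ once $|\Theta|$ is non-empty. I expect the genuine obstacle to be exactly this depth bookkeeping: certifying that, across the mutual recursion between components and the proliferation of relevant configurations, the depths at which a given symbol is reached form (or are contained in) an initial segment of $\mathbb{N}$ described by an arithmetic term, and handling the boundary cases (does the base leaf $\Theta^{\pi_i,\Omega}$ at depth $0$ contribute a further atom, is the top symbol reached at depth $\alpha$ only, and so on) — which is precisely where the case-distinction option earns its keep. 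The remaining ingredients — finiteness of the rule system, the homomorphism property of atom extraction, and the commutation of defined-symbol normalisation with $\downarrow_{\alpha}$ — are routine.
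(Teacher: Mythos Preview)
Your proposal is considerably more detailed than what the paper itself offers. The paper does not prove this lemma at all: it is stated with a citation to \cite{ConduThesis}, and the only remark following it is that the propositional result ``can easily be extended by constructing the corresponding predicate term schema for each use of a given predicate symbol.'' That is the entire argument in the paper.

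Your construction is therefore not a match for the paper's proof but rather a plausible reconstruction of what the cited thesis presumably does, lifted to the first-order setting. The two key structural observations you isolate --- that atom extraction is a homomorphism with respect to $\oplus$ and $\otimes$, and that the rewrite system defining $T$ has only finitely many symbols and right-hand sides --- are exactly the right ingredients, and they are what make the extraction finitary. Your handling of the depth bookkeeping (bounding, for each reachable pair $(\mathbf{C}_i,\Omega)$, the arguments at which $\cL^{\mathbf{C}_i,\Omega}$ is invoked during the unfolding of $T\!\downarrow_\alpha$) is the genuinely delicate part, and you correctly flag it as such. One point to tighten: you assert that cross-component edges carry a \emph{monotone} arithmetic depth-transformation, but the definition of proof schema only requires the argument $t$ of an outgoing link to be an arbitrary $\omega$-term in $n$; monotonicity is not guaranteed by the calculus, so your per-path bound should be phrased simply as the composite arithmetic term along that path, without appealing to monotonicity. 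Since the cross-component graph is a DAG and within-component self-calls strictly decrease the argument, the set of invocation arguments along any fixed DAG-path is still an arithmetic-term-indexed initial segment, which is what your triple needs.

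In short: there is no proof in the paper to compare against, and your proposal is a reasonable and essentially correct fleshing-out of the one-line extension remark the authors give, modulo the monotonicity overclaim.
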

Though, this lemma was proven for schematic propositional logic, it can easily be extended by constructing the corresponding predicate term schema for each use of a given predicate symbol. 
\label{sec:ClauAnPs}
We introduced the concept of lazy instantiation to deal with precisely the following problem faced during application of reductive cut elimination to proof schemata. Note that this may result in additional predicate and term pairings which do not exists in the proof schema, but more importantly, the extraction method presented in~\cite{ConduThesis} guarantees that  no pairings, of which are presented in the given proof schema, are missing. This property allows us to further extend our subsumption results, as can be seen in Figure~\ref{fig:one} point (x). 

\begin{definition}[Top clause set schemata~\cite{ConduThesis}]
\label{def:topclass}
Let $\mathcal{A}$ be an atom set schema. We denote the corresponding top clause set schema by $\mathit{CL}^{t}(\mathcal{A})$. 
\end{definition}

\begin{definition}[Semantics of top clause set schemata~\cite{ConduThesis}]
For $\alpha\in \mathbb{N}$, $\mathit{CL}^{t}(\mathcal{A})\downarrow_{\alpha} = \mathit{CL}^{t}(\mathcal{A}\downarrow_{\alpha})$.
\end{definition}

In~\cite{ConduThesis}, it was shown that a propositional top clause set schemata can be refutated schematically (Lemma 10). Problematically, in the first-order case, a substitution schema must also be defined~\cite{CERESS}. If a substitution schema exists for a given first-order top clause set schema a schematic refutation can be defined. Though, the specification of such a substitution schema remains an open problem. In Figure~\ref{fig:one}, construction of the substitution schema  and construction of the refutation (see~\cite{ConduThesis}) are point (xii). 

\subsection{Clausal Analysis}
The main result of this paper is as follows:

\begin{theorem}
\label{thm:schclausalSub}
Let $\Psi\in \byus{2}{}$ be an accumulating proof schema, $\Phi\in lr(\Psi)$ be lazily reachable from $\Psi$, and $\Xi\in rr(\Phi)$ reductively reachable from $\Phi$. Then for all $\alpha \in \mathbb{N}$ there exists a numeral $\beta \in \mathbb{N}$ and an \ACNFtop $\chi$, such that $\Theta(\psi)\leq_{ss} \Theta(\chi)$ for all $\psi \in rr(\Psi\downarrow_{\beta})$, and $\Theta^{\Xi}\downarrow_{\alpha} \leq_{ss} \Theta(\chi)  \leq_{ss}  \mathit{CL}^{t}(\mathcal{A}^{\Theta^{\Xi}})\downarrow_{\alpha} $, where $\mathcal{A}^{\Theta^{\Xi}}$ is an atom set schema.
\end{theorem}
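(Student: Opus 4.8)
The plan is to assemble the theorem from the ingredients already in place: Lemma~\ref{lem:synlrrrproof} for the ``descent to an \LK-proof'' step, Corollary~\ref{cor:subTANCF} for the classical clausal subsumption of \LK-proofs by their \ACNFtop, Proposition~\ref{prop:clauseequal} to pass between schematic clause terms and the clause terms of instantiations, and Lemma~\ref{lem:comass} for the existence of the atom set schema. Fix $\alpha\in\mathbb{N}$. Since $\Phi\in lr(\Psi)$ and $\Psi$ is accumulating, Lemma~\ref{lem:pairing} gives a $\beta\in\mathbb{N}$ with $(\alpha,\beta)\in\mathit{np}(\Phi,\Psi)$, i.e.\ $\Phi\downarrow_\alpha\equiv_{syn}\Psi\downarrow_\beta$. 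This is the $\beta$ claimed by the theorem.

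First I would produce the \ACNFtop\ $\chi$. Apply Proposition~\ref{prop:sound} to get an \LK-proof of $\Psi\downarrow_\beta$, and let $\chi$ be an \ACNFtop\ of it, constructed as in Section~\ref{sec:TANCF} (generalized reductive cut-elimination omitting the two atomic-cut-elimination rules). For the first subsumption claim, take any $\psi\in rr(\Psi\downarrow_\beta)$; since reductive cut-elimination rules commute appropriately with the \ACNFtop\ construction (every \LK-proof reductively reachable from $\Psi\downarrow_\beta$ has the \emph{same} \ACNFtop, up to the results cited in~\cite{BAAZ2006a}), Corollary~\ref{cor:subTANCF} applied along the reduction path from $\psi$ yields $\Theta(\psi)\leq_{ss}\Theta(\chi)$. (Here I would invoke that $\chi$ is also an \ACNFtop\ of $\psi$, which is exactly what Theorem~6.1 of~\cite{BAAZ2006a} underwrites.)

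Next I would handle the two-sided sandwich for $\Xi$. Since $\Xi\in rr(\Phi)$ and $(\alpha,\beta)\in\mathit{np}(\Phi,\Psi)$, Lemma~\ref{lem:synlrrrproof} gives an \LK-proof $\chi'\in rr(\Psi\downarrow_\beta)$ with $\Xi\downarrow_\alpha\equiv_{syn}\chi'$. By Proposition~\ref{prop:clauseequal}, $\Theta^{\Xi}\downarrow_\alpha\equiv_{syn}\Theta(\Xi\downarrow_\alpha)=\Theta(\chi')$. Applying the previous paragraph with $\psi:=\chi'$ gives $\Theta^{\Xi}\downarrow_\alpha=\Theta(\chi')\leq_{ss}\Theta(\chi)$, the left inequality. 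For the right inequality, let $\mathcal{A}^{\Theta^{\Xi}}$ be the atom set schema for the clause set term schema $\Theta^{\Xi}$ supplied by Lemma~\ref{lem:comass}. One must show $\Theta(\chi)\leq_{ss}\mathit{CL}^{t}(\mathcal{A}^{\Theta^{\Xi}})\downarrow_\alpha$: the key point, already flagged in Section~\ref{sec:ClauAnPs}, is that the extraction of the atom set schema collects \emph{every} predicate-term pairing occurring in $\Theta^{\Xi}$, hence in $\Theta(\chi)$ (the \ACNFtop\ clauses are built from atoms already present, by the \ACNF\ construction); so each clause of $\Theta(\chi)$ is, atom-by-atom, a sub-clause of some clause in the product $\mathit{CL}^{t}(\mathcal{A}^{\Theta^{\Xi}})\downarrow_\alpha$, which by definition contains all clauses over these atoms — giving the required subsumption (with empty substitution, using that $\subseteq$ implies $\leq_{ss}$).

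The main obstacle I anticipate is the justification that $\chi$ may be taken as a common \ACNFtop\ for all of $rr(\Psi\downarrow_\beta)$ — i.e.\ that the \ACNFtop\ is stable under the reductive reductions one has already performed — and, relatedly, making precise why the lazy-instantiation detour in Lemma~\ref{lem:synlrrrproof} does not introduce clauses into $\Theta(\chi)$ that escape the atom set schema extracted from $\Theta^{\Xi}$. Both hinge on carefully tracking cut-ancestors across proof links, which is exactly where the acyclicity of $\Psi.1$ (guaranteed for accumulating schemata) and the ``no pairing is missing'' property of the extraction of~\cite{ConduThesis} must be used; the remaining steps are bookkeeping on top of the cited theorems.
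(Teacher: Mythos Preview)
Your proposal is correct and follows essentially the same approach as the paper: the paper's proof is a two-line sketch citing exactly Lemma~\ref{lem:synlrrrproof} plus Corollary~\ref{cor:subTANCF} for the left inequality $\Theta^{\Xi}\downarrow_{\alpha}\leq_{ss}\Theta(\chi)$, and Lemma~\ref{lem:comass} plus the extraction method of~\cite{ConduThesis} for the right inequality $\Theta(\chi)\leq_{ss}\mathit{CL}^{t}(\mathcal{A}^{\Theta^{\Xi}})\downarrow_{\alpha}$. Your write-up simply unpacks these citations (supplying $\beta$ via Lemma~\ref{lem:pairing}, invoking Proposition~\ref{prop:clauseequal} to identify $\Theta^{\Xi}\downarrow_\alpha$ with $\Theta(\Xi\downarrow_\alpha)$, and explicitly treating the clause $\Theta(\psi)\leq_{ss}\Theta(\chi)$ for all $\psi\in rr(\Psi\downarrow_\beta)$, which the paper's proof leaves implicit), and your flagged obstacles are precisely the points the paper delegates to~\cite{BAAZ2006a} and~\cite{ConduThesis}.
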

\begin{proof}
The statement $\Theta^{\Xi}\downarrow_{\alpha} \leq_{ss} \Theta(\chi) $ is a consequence of Lemma~\ref{lem:synlrrrproof} and Corollary~\ref{cor:subTANCF}. The statement $\Theta(\chi)  \leq_{ss}  \mathit{CL}^{t}(\mathcal{A}^{\Theta^{\Xi}})\downarrow_{\alpha} $ is a result of Lemma~\ref{lem:comass} and the extraction method of~\cite{ConduThesis}.
\end{proof}

In Figure~\ref{fig:one}, we present the entire process concerning how one goes from a proof schema $\Phi$ to 
a refutation. Note, as we mention before, the existence of a refutation for a first-order top clause set 
schema is dependent on the existence of a substitution schema~\cite{CERESS}, of which the construction is 
still an open problem, that is whether a substitution schema exists for every first-order top clause set. We 
leave this question to future work. The first step in Figure~\ref{fig:one}, (i), constructs an accumulating 
proof schema $\Phi^{A}$ from $\Phi$. This adds an additional component which ``accumulates'' the lazily 
instantiated proof links. After the construction of an accumulating proof schema $\Phi^{A}$, we can either 
instantiate it (iii) or apply lazy instantiation (ii) and construct a proof schema $\Phi'$. The point of 
lazy instantiation is to get around situation like the following: 
\begin{small}\begin{prooftree}
\AxiomC{$\begin{array}{c} (\varphi_{l},t,\bar{x})\end{array}$}
\dashedLine
\UnaryInfC{$\begin{array}{c}  C,\Delta \vdash  \Gamma\end{array}$}
\AxiomC{$\begin{array}{c} (\varphi_{j},t',\bar{x})\end{array}$}
\dashedLine
\UnaryInfC{$\begin{array}{c}  \Delta' \vdash  \Gamma', C\end{array}$}
\RightLabel{cut}
\BinaryInfC{$ \Delta,\Delta' \vdash  \Gamma,\Gamma'$}
\end{prooftree}\end{small}
in which Gentzen rules cannot be applied unless we evaluate the proof links. Note that by Lemma~\ref{lem:pairing}, we can instantiate $\Phi^{A}$ and $\Phi'$ such that they are equivalent. The heart of our result is that lazy instantiation allows us to apply reductive cut elimination to an arbitrarily large but finite portion of $\Phi^{A}$. This results in a proof schema $\Psi$. In turn, we can apply the same reductive cut elimination steps  $\Phi^{A}\downarrow_{\alpha}$ resulting in an  \LK-proof $\psi$ whose clause set subsumes the evaluation of the clause set schema of $\Psi$ at $\beta$. Furthermore, by extending the results of~\cite{BAAZ2006a}, we are able to show subsumption properties between the clause set schema of $\Psi$ and the clause set of $\psi$, as well as between both clause sets and the clause set of the \ACNFtop\ of $\psi$. 

 \begin{figure}
\begin{center}
\begin{small}\begin{tikzpicture}
    \node[] (A0) at (-5,1.5) {$\Phi $};
    \node[] (A) at (-3,1.5) {$\Phi^{a}$};
    \node[] (Bc) at (4.5,1.5) {$\Psi$};

    \node[] (B) at (0,1.5) {$\Phi'$};
    \node[] (An) at (-3,-.5)  {$\Phi^{a}\downarrow_{\alpha}$};
    \node[] (Bn) at (0,-.5)  {$\Phi'\downarrow_{\beta}$};
    \node[] (Bnc) at (4.5,0)  {$\Theta^{\Psi}$};
\node[] (Atomsch) at (6.5,0)  {$\mathcal{A}^{\Psi}$};
     \node[] (C) at (0,-2)  {$\psi$};
      \node[] (down4c) at (2,-2)  {$\Theta(\Psi\downarrow_{\beta})$};

    \node[] (Cr) at (4.5,-2)  {$\Theta^{\Psi}\downarrow_{\beta}$};

     \node[] (D) at (0,-4)  {$\psi^{top}$};
     \node[] (E) at (2,-4)  {$\Theta(\psi^{top})$};
     \node[] (F) at (6.5,-4)  {$CL^{t}(\mathcal{A}^{\Psi})$};
          \node[] (G) at (8.8,-4)  {$\vdash$};
     \path [->] (F) edge node[above, yshift=0cm] {(xii)} (G);
     \path [dashed,->] (Cr) edge node[left, yshift=0cm] {(ix)} (F);

     \path [->] (Atomsch) edge node[right, yshift=0cm] {(xi)} (F);
     \path [->] (Bnc) edge node[above, yshift=0cm] {(x)} (Atomsch);
     \path [dashed,->] (E) edge node[above, yshift=0cm] {(ix)} (F);

    	\path [dashed,->] (A) edge node[above, yshift=0cm] {(ii)} (B);
    	\path [->] (A) edge node[left] {(iii)} (An);
    	\path [->] (B) edge node[right] {(iii)} (Bn);
	    	\path [dashed,->] (Bn) edge node[above] {(iv)} (An);
		\path [->] (Bn) edge node[left] {(v)} (C);
	    	\path [->] (An) edge node[below] {(v)} (C);
	    	\path [->] (C) edge node[left] {(v)} (D);
	    	\path [->] (D) edge node[below, rotate=0] {(vi)} (E);
	    	\path [dashed,->] (B) edge node[above] {(v)} (Bc);
	    	\path [->] (Bc) edge node[right] {(vi)} (Bnc);
	    	\path [->] (Bc) edge node[left] {(iii)} (C);
	    	\path [->] (down4c) edge node[left] {(vii)} (E);
	    	\path [dashed,->] (A0) edge node[above,xshift=0cm,yshift=0cm, rotate=0] {(i)} (A);

	    	\path [<->] (Cr) edge node[above] {(viii)} (down4c);

	    	\path [->] (Bnc) edge node[right] {(iii)} (Cr);
	    	\path [->] (C) edge node[below] {(vi)} (down4c);
	    	\path [dashed,->] (Cr) edge node[right] {(ix)} (E);

\end{tikzpicture}\end{small}
\end{center}
\caption{ Clause set subsumption relationships for proof schemata. (i):{\small Def.~\ref{def:AccSch}}, (ii):{\small Def.~\ref{def:LazyEval}}, (iii): Def.~\ref{def:eval}, (iv):{\small Lem.~\ref{lem:pairing}}, (v): Reductive cut-elimination, (vi): {\small Def.~\ref{def:CTS}}, (vii): Corollary~\ref{cor:subTANCF}, (viii): Proposition~\ref{prop:clauseequal} (ix): Theorem~\ref{thm:schclausalSub} and transitivity of $\leq_{ss}$, (x): Lemma~\ref{lem:comass}, (xi): Definition~\ref{def:topclass}, (xii) When there exists a substitution schema for $CL^{t}(\mathcal{A}^{\Psi})$. Note that {\em dashed arrows} represent our results and {\em solid arrows} existing results.} 
\label{fig:one}
 \end{figure}
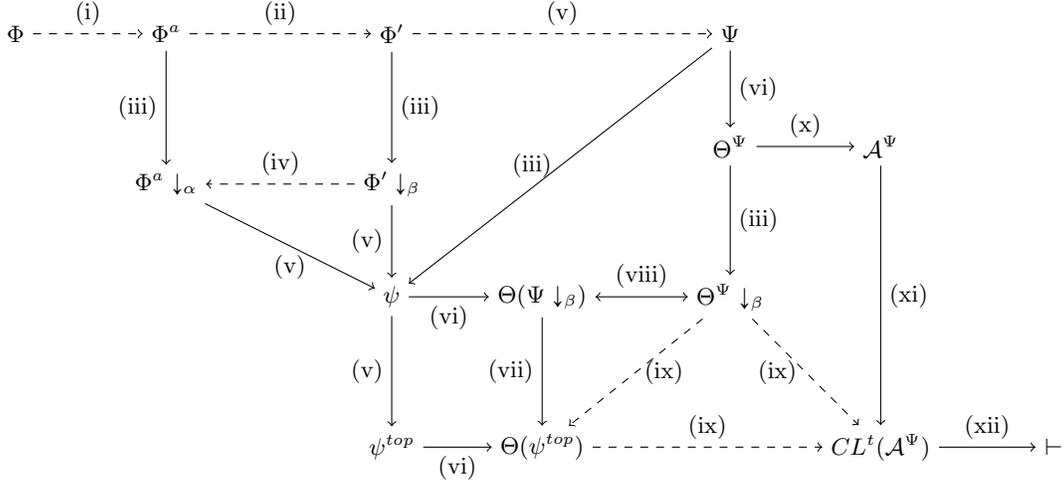

\section{Conclusion}
\label{sec:Conclusion}
In this work we extend results concerning the clausal analysis of \LK-proofs to  first-order proof schemata~\cite{BAAZ2006a}. The results of~\cite{BAAZ2006a}  are based on clause set subsumption  properties between an \LK-proof $\varphi$ and an \LK-proof  $\psi$ which is the result of applying reductive cut-elimination to $\varphi$, without atomic cut elimination. These results are not easily extendable to proof schemata because cuts cannot be easily passed through proofs links. We define lazy instantiation allowing   proof links to unfold and  allowing reductive cut-elimination to continue. Thus, we were able to derive clause set subsumption results for arbitrary applications of reductive cut-elimination to proof schemata. Furthermore, using the structural results of~\cite{ConduThesis,WolfThesis}, we were able to strengthen these results and reduce the problem of constructing an \ACNF\ schema (see~\cite{CERESS}) to find a substitution schema~\cite{CERESS}. As discussed in~\cite{MyThesis,DBLP:conf/cade/CernaL16}, defining the structure of the resolution refutation is the hardest part of the proof analysis using schematic \CERES~\cite{CERESS}. For future work, we plan to investigate how the introduced concepts can be applied to other issues concerning proof schemata. Also, we plan to investigate ways one can simplify the construction of substitution schemata, possibly through the use of automated theorem proving methods~\cite{DBLP:conf/cade/CernaL16}.

\section*{Acknowledgment}
We would like to thank Alexander Leitsch for reviewing and providing suggestions.

\bibliographystyle{plain}
\bibliography{Refs}
\end{document}